\newcommand{\PSp}{\mathop{\mathrm{PSp}}}
\newcommand{\soc}{\mathop{\mathrm{soc}}}
\newcommand{\PGL}{\mathop{\mathrm{PGL}}}
\newcommand{\PSU}{\mathop{\mathrm{PSU}}}
\newcommand{\PSL}{\mathrm{\mathrm{PSL}}}
\newcommand{\GL}{\mathop{\mathrm{GL}}}
\newcommand{\lcm}{\mathop{\mathrm{lcm}}}
\newcommand{\PGammaL}{\mathop{\mathrm{P}\Gamma\mathrm{L}}}
\newcommand{\Alt}{\mathop{\mathrm{Alt}}}
\newcommand{\Sym}{\mathop{\mathrm{Sym}}}
\renewcommand{\wr}{\mathop{\mathrm{wr}}}
\newtheorem{theorem}{Theorem}[section]
\def\cent#1#2{{\bf C}_{#1}(#2)}
\def\PGdq{{\rm PG}_{d-1}(q)}
\newtheorem{proposition}[theorem]{Proposition}
\newtheorem{lemma}[theorem]{Lemma}
\newenvironment{theorem*}[1][Theorem.]{\begin{trivlist}\item[\hskip \labelsep {\bfseries #1}]}{\end{trivlist}}
\newtheorem{remark}[theorem]{Remark}
\newtheorem{notation}[theorem]{Notation}
\begin{document}

\title[Four cycles]{Finite primitive permutation groups containing a permutation having at most four cycles} 

\author[S.~Guest]{Simon Guest}
\address{Simon Guest, Department of Mathematics, University of Southampton, \newline 
Highfield, SO17 1BJ, United Kingdom}\email{s.d.guest@soton.ac.uk}

\author[J. Morris]{Joy Morris}
\address{Joy Morris, Department of Mathematics and Computer Science,
University of Lethbridge,\newline
Lethbridge, AB. T1K 3M4. Canada}
\email{joy@cs.uleth.ca}

\author[C. E. Praeger]{Cheryl E. Praeger}
\address{Cheryl E. Praeger, Centre for Mathematics of Symmetry and Computation,
School of Mathematics and Statistics,
The University of Western Australia,\newline
 Crawley, WA 6009, Australia}\email{cheryl.praeger@uwa.edu.au}

\author[P. Spiga]{Pablo Spiga}
\address{Pablo Spiga,
Dipartimento di Matematica e Applicazioni, University of Milano-Bicocca,\newline
Via Cozzi 53, 20125 Milano, Italy}\email{pablo.spiga@unimib.it}

\thanks{Address correspondence to P. Spiga,
E-mail: pablo.spiga@uwa.edu.au\\
The second author is supported in part by the National Science
 and Engineering Research Council of Canada.
 The third author is supported by the ARC Federation
Fellowship Project
FF0776186. The fourth author is supported by UWA as part of the ARC Federation Fellowship Project. }

\subjclass[2000]{20B15, 20H30}
\keywords{primitive permutation groups; conjugacy classes; cycle structure}

\begin{abstract}
We classify the finite primitive groups containing a permutation with at most four cycles (including fixed points) in its disjoint cycle representation. 
\end{abstract}
\maketitle

\section{Introduction}\label{introduction}

In this paper, we are interested in permutations that have a small number of cycles (including fixed points) in their disjoint cycle representations.
Primitive permutation groups containing elements with few cycles have been studied for over a century, because of their importance in various areas of mathematics. In particular, interest in primitive groups on $n$ points containing an $n$-cycle dates back to a 1911 theorem by Burnside~\cite[1 \S 251--252]{burnside}, and a complete classification of such groups, based on the Finite Simple Group Classification, was achieved in 1997~\cite{Feit,jones,mcsorley}. This classification has important consequences, including the investigation of cyclic codes, cyclic designs, Cayley graphs on cyclic groups, and rotary embeddings of graphs in surfaces.
Motivated by certain number-theoretic applications which we mention briefly in Section 1.2, M\"uller~\cite{mueller} classified primitive groups containing permutations with two cycles. 

The aim of this paper is to classify the primitive groups containing a permutation with at most four cycles in its disjoint cycle representation, including fixed points.  Our classification will be applied to a problem about normal coverings of symmetric and alternating groups, and we give some details about this application in Section 1.2.

Such primitive groups turn out to be quite rare, and we are able to classify them according to the permutation actions of their socles. We are further able to list the possible $N$-tuples of cycle lengths that can appear in a group with each possible socle, where $N \le 4$. Some of the techniques used are similar to those used by M\"uller in~\cite{mueller}.

\subsection{Main theorem and proof strategy.}

Our main result is the following.
\begin{theorem}\label{MAIN}
Let $G$ be a finite primitive permutation group of degree $n$, and let $g$ be an element of $G$ having cycle lengths (counted with multiplicity) $(n_1,\ldots,n_N)$ with $N\leq 4$.  Then the socle $\soc(G)=T^\ell$ for some simple group $T$ and integer $\ell$, and
 one of the following holds:
\begin{enumerate}
\item[(i)] $\soc(G)=\Alt(m)^\ell$ in its natural product action of degree $m^\ell$, with $\ell\leq 3$;
\item[(ii)] $\soc(G)=\PSL_d(q)^\ell$ in its natural product action of degree $((q^d-1)/(q-1))^\ell$, with $\ell\leq 3$ and $d \ge 2$;
\item[(iii)] $T$, $\ell$, $n$ and $(n_1,\ldots,n_N)$ are in one of the rows of Tables~\ref{BIGtablea},~\ref{BIGtableb} or~\ref{BIGtablec};
\item[(iv)] $\soc(G)$ is elementary abelian, and $g$ and $G$ are described in~Theorems~1.3 and~1.4 of \cite{GMPSaffine}.
\end{enumerate}
Moreover, Tables~\ref{table1} and~\ref{table2} list all of the possible $N$-tuples $(n_1,\ldots,n_N)$ arising from parts~$(i)$ and~$(ii)$ respectively.
\end{theorem}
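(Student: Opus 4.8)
The plan is to combine the O'Nan--Scott theorem with one elementary but decisive observation: a permutation with at most four cycles on $n$ points has a cycle of length at least $n/4$, and since the order of $g$ equals $\lcm(n_1,\dots,n_N)$ and is therefore divisible by this longest cycle length, we get $|g|\ge n/4$. Thus every group in the conclusion contains an element of order at least $n/4$, so $\meo(G)\ge n/4$, and the bulk of the classification reduces to determining which primitive groups of degree $n$ satisfy this inequality, followed by a finer determination of the admissible cycle partitions once the candidate groups are in hand. First I would apply the O'Nan--Scott theorem to split the primitive groups into the affine, almost simple, product action, diagonal, and twisted wreath types, and dispose of them one type at a time.

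The affine type is precisely the case in which $\soc(G)$ is elementary abelian; here I would quote Theorems~1.3 and~1.4 of~\cite{GMPSaffine} to obtain conclusion~(iv). In the diagonal and twisted wreath types the point stabiliser is so large relative to $\soc(G)=T^\ell$ that the degree $n$ dwarfs every element order available in $T^\ell$ and its extensions; invoking the CFSG-based upper bounds on element orders in almost simple groups, I expect $\meo(G)<n/4$ outside a short list of small configurations, which are then recorded in Tables~\ref{BIGtablea}--\ref{BIGtablec} and contribute to conclusion~(iii). This leaves the two substantive families of primitive groups.

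The core of the argument is the almost simple and product action cases. When $\ell=1$ the group is almost simple, and I would proceed through the families supplied by the CFSG, in each case estimating $\meo(T)$ and $\meo(\Aut(T))$ against the minimal permutation degree of $T$. The natural action of $\Alt(m)$ gives conclusion~(i) with $\ell=1$ (an $m$-cycle, or a near-$m$-cycle, furnishing the long cycle), and the action of $\PSL_d(q)$ on the points of $\PGdq$ gives conclusion~(ii) with $\ell=1$, since a Singer cycle acts there with a single long cycle; all other almost simple actions either violate $\meo\ge n/4$ or fall into the sporadic rows of Tables~\ref{BIGtablea}--\ref{BIGtablec}. In the product action case $G\le H\wr\Sym(\ell)$ acts on $\Delta^\ell$ with $H$ almost simple and primitive of degree $m=|\Delta|$, so $n=m^\ell$. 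Here I would analyse the disjoint cycle structure of a general element $(h_1,\dots,h_\ell)\tau$, whose cycles on $\Delta^\ell$ arise as least common multiples of cycle lengths of the coordinate permutations governed by the top permutation $\tau$; bounding the total number of such cycles by four forces $\ell$ to be small (the classification yields $\ell\le3$) and pins the socle of $H$ down to $\Alt(m)$ or $\PSL_d(q)$, feeding back into conclusions~(i) and~(ii) with $\ell\ge2$. Finally, the admissible tuples $(n_1,\dots,n_N)$ for parts~(i) and~(ii) are assembled by combining the coordinate cycle data permitted in the base group under the wreath action, producing Tables~\ref{table1} and~\ref{table2}.

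The main obstacle will be the combination of two lengthy analyses: the exhaustive, family-by-family treatment of the almost simple groups of Lie type, where deciding exactly which actions of the classical and exceptional groups admit an element of order at least $n/4$ demands delicate estimates of maximal element orders (maximal torus orders, Singer cycles, and unipotent contributions) against the minimal degrees; and the product action cycle bookkeeping, where one must track not merely element orders but the entire multiset of cycle lengths through the least-common-multiple combinatorics of the wreath product. I expect the $\PSL_d(q)$ cases to be the most delicate in both settings, because Singer-type elements produce unexpectedly long cycles and must be examined individually to extract the precise tuples recorded in Table~\ref{table2} and to confirm the bound on $\ell$.
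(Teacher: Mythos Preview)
Your overall strategy matches the paper's: the pivot is the inequality $|g|\ge n/4$, followed by a classification of primitive groups admitting such an element, and then a refinement to pin down the cycle tuples. The paper, however, does not run the O'Nan--Scott analysis and the family-by-family Lie-type estimates inside this proof; that work is packaged entirely in \cite[Theorem~1.3]{GMPSorders}, which is quoted as a black box. Once that theorem is invoked, the diagonal, twisted wreath, and all but finitely many almost simple actions are already gone, and what remains is exactly $\Alt(m)^\ell$ acting on products of $k$-subsets, $\PSL_d(q)^\ell$ on products of projective points, the affine case, and a finite table handled by \texttt{magma}. So your proposed ``main obstacle''---the exhaustive Lie-type element-order estimates---is not part of this paper at all; you would be reproving \cite{GMPSorders} inside the present argument.

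There is one genuine omission in your sketch. The output of \cite[Theorem~1.3]{GMPSorders} for the alternating socle is $\Alt(m)^\ell$ acting on $k$-\emph{subsets}, not merely on points, and reducing to $k=1$ (which is what conclusion~(i) records) requires real work: one must show that for $k\ge 2$ the action of $\Sym(m)$ on $k$-sets almost never contains an element with at most four cycles. The paper devotes Section~\ref{sec2} to this, bounding the number of $g$-orbits on $k$-sets from below via the intersections of $k$-sets with a long cycle of $g$ on $\{1,\dots,m\}$, and concluding that only $k=2$, $m\le 9$ and $k=3$, $m=6$ survive (the latter being imprimitive). Your plan jumps straight from ``$\Alt(m)$ in its natural action'' to conclusion~(i) without addressing these other primitive actions of $\Alt(m)$; without this step the reduction is incomplete.
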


The cycle lengths listed in Tables~\ref{table1} and~\ref{table2} may not be possible in every group that has the given socle, but do occur in at least some such groups. For example, in $\Alt(m)$ the possible cycle lengths depend upon the parity of $m$, but $\Alt(m)$ is the socle of $\Sym(m)$ and in $\Sym(m)$ any cycle lengths are possible, so this is what we have listed.

It is an elementary exercise to show that for a permutation group element $g$, the order of $g$ is the least common multiple of the lengths of the cycles into which it can be decomposed. If $g$ has at most four cycles and lies inside a permutation group of degree $n$, the pigeon-hole principle guarantees that one of those cycles must have length at least $n/4$; so clearly $|g|\ge \lceil n/4\rceil$. It is therefore natural that our proof of Theorem~\ref{MAIN} uses the recent classification of the finite primitive groups $G$ of degree $n$ containing a permutation $g$ with the order $|g|$ of $g$ at least $n/4$, see~\cite[Theorem~$1.3$]{GMPSorders}. In particular, in order to prove Theorem~\ref{MAIN} it suffices to investigate which groups in~\cite[Theorem~$1.3$]{GMPSorders} actually contain a permutation with at most four cycles. Comparing Theorem~\ref{MAIN} with~\cite[Theorem~$1.3$]{GMPSorders} we see that the number of examples is actually very limited. For the benefit of the reader we record the statement of~\cite[Theorem~$1.3$]{GMPSorders}.

\begin{theorem*}{{(Theorem~$1.3$~\cite{GMPSorders})}}
\emph{
Let $G$ be a finite primitive group of degree $n$ and assume that $G$ contains a
permutation $g$ with $|g|\geq n/4$. Then the socle $\soc(G)=T^\ell$ of $G$ is isomorphic to either
\begin{enumerate}
\item[(i)] $\Alt(m)^\ell$ in its natural product action on $\ell$ direct copies of the $k$-subsets of
$\{1,\ldots,m\}$, or to
\item[(ii)] $\PSL_d(q)^\ell$ in its natural action on $\ell$ direct copies of the points or hyperplanes of the projective space $\PGdq$, or to
\item[(iii)] an elementary abelian group $C_p^{\ell}$, and $G$ and $g$ are described in~\cite{GMPSaffine}, or to
\item[(iv)] one of the groups in~\cite[Table~2]{GMPSorders}.
\end{enumerate}
Moreover, there exists a positive integer $\ell_T$ depending only on $T$ with $\ell\leq \ell_T$.}
\end{theorem*}
For each group $G$ in~\cite[Table~2]{GMPSorders}, the exact value of $\ell_T$ is given in~\cite[Table~6]{GMPSorders}. In particular, for each of the groups arising from~\cite[Theorem~$1.3$~(iv)]{GMPSorders}, the proof of Theorem~\ref{MAIN} follows with an immediate computation in \texttt{magma}~\cite{magma}: for each such group we have given in Tables~\ref{BIGtablea},~\ref{BIGtableb} and~\ref{BIGtablec} all the examples admitting a permutation with at most four cycles. Therefore, for the rest of this paper, we may assume that the permutation group $G$ is as in part~(i) or~(ii) of~\cite[Theorem~$1.3$]{GMPSorders}.

\subsection{Motivation.} 
Classifications of this type are of interest from a computational and from a theoretical point of view. For instance, finding the order or the number of cycles of a permutation is a very inexpensive operation that a computer can perform as part of a recognition algorithm. In particular, an 1863 theorem of Jordan underpins a Monte Carlo algorithm that is used by major computer algebra systems, {\sf GAP}~\cite{GAP4} and \texttt{magma}~\cite{magma}, to determine whether or not a given set of permutations of $n$ points generates $\Alt(n)$ or $\Sym(n)$. Jordan's theorem guarantees that the only primitive groups of degree $n$ containing an element with one cycle of prime length $p$, and $n-p \ge 3$ fixed points, are $\Alt(n)$ or $\Sym(n)$. 
Other theoretical results have also been important in various computer tests. For example the studies in~\cite{LM,PrPr} on the primitive groups containing a permutation of prime order $p$ and having at most $p-1$ cycles of length $p$ have been used to improve the Monte Carlo algorithm described above.

Theoretical applications using lists of primitive groups containing permutations of a certain structure are numerous. We describe briefly a few of these applications here. The application to the covering number of $\Sym(m)$ is our main motivation.


Given a finite group $G$, the  normal coverings of $G$ are the families $H_1,\ldots,H_r$ of proper subgroups of $G$ such that each element of $G$ has a conjugate in $H_i$ for some $i\in \{1,\ldots,r\}$. The minimum $r$ is usually denoted by $\gamma(G)$. For $G=\Alt(m)$ or $\Sym(m)$, it is shown in~\cite{BP} that $a\varphi(m)\leq \gamma(G)\leq bm$ for positive constants $a$ and $b$ (where $\varphi$ denotes Euler's totient function). More recently, Bubboloni, Praeger and Spiga~\cite{BPS} developed some new research on this topic starting with the idea that primitive subgroups of the symmetric group are ``few and small'' and therefore cannot play a significant role in normal coverings. In particular, as an application of Theorem~\ref{MAIN}, they considerably improve the lower bound on $\gamma(G)$.
The normal coverings of the symmetric and alternating groups also play a role in Galois theory. For example, let $f(x)\in\mathbb{Z}[x]$ be a polynomial that has a root modulo $p$ for all primes $p$ and yet has no root in $\mathbb{Q}$. Consider the Galois group $G$ of $f(x)$ over $\mathbb{Q}$. Remarkably, the number of irreducible factors of $f(x)$ over $\mathbb{Q}$ is at least $\gamma(G)$ by~\cite[Theorem~2]{42}. 

Finite primitive groups containing a permutation with few cycles also play a crucial role in the study of the monodromy groups of Siegel functions and in the proof of a stronger version of Hilbert's irreducibility theorem~\cite{mueller}. Here we give a brief account of the relationship between these ostensibly unrelated topics and we refer the interested reader to the introduction of~\cite{mueller} for details. Let $k$ be a number field, let $\mathcal{O}_k$ denote the ring of integers of $k$, and let $f(t,X) \in k(t)[X]$ be an irreducible polynomial (as usual $k(t)$ denotes the field of $k$-rational functions in the variable $t$). Hilbert's irreducibility theorem states that $f(t_0,X)$ is irreducible over $k$ for
infinitely many integral specialisations $t_0 \in \mathcal{O}_k$. However, there is no control whatsoever on which values of $t_0$ can be used. Using a remarkable theorem of Siegel, one can see that in order to obtain
a refined version of Hilbert's irreducibility theorem, one has to obtain information on the rational functions $g(X)$ such that $\{g(a)\mid a \in k\}\cap \mathcal{O}_k$ has infinite cardinality. Such functions are now called Siegel functions. Another theorem of Siegel shows that a Siegel function $g(X)$ has at most two poles on the Riemann sphere. Thus, the Galois group of $g(X)-t$ over $k(t)$ contains a permutation with at most two cycles. Therefore the list of the finite primitive groups containing a permutation with at most two cycles is essential for the refinement of Hilbert's irreducibility theorem obtained in~\cite{mueller}.

There is another number theoretic application that we wish to discuss briefly and we thank  M\"{u}ller for bringing this to our attention (we refer the reader to the beautiful introduction in~\cite{gems} for more details). Given a positive integer $\ell$, a {\em lacunary} rational function (with respect to $\ell$) is an expression $f(t)=P(t)/Q(t)$, with $P(t)$ and $Q(t)$ polynomials having at most $\ell$ summands in total. The main result of~\cite{gems} describes the decompositions $f(t)=g(h(t))$ with $g,h\in k(t)$ of degree at least $2$. In this deep investigation, again the list of the finite primitive groups admitting a permutation with at most two cycles is absolutely essential.

We consider it very likely that there are other applications of such results, and we hope that the existence of this result will prove useful and motivational to researchers in a variety of fields.

\subsection{Structure of the paper}\label{struc}In Section~\ref{sec2}, we have a closer look at the action of $\Sym(m)$ on $k$-sets and we show that (for $k\geq 2$) such a group contains a permutation with at most four cycles only when $k\leq 3$ and $m\leq 9$. In Section~\ref{sec3} we study $\PGammaL_d(q)$ in its natural action on the points or hyperplanes of the projective space $\PGdq$ and  determine the cycle structure of the permutations that have at most four cycles. We use the results of Sections~\ref{sec2} and~\ref{sec3} in Section~\ref{notation}, to prove Theorem~\ref{MAIN}. All of our tables are contained in Section~\ref{sec:tables}.

\section{The Tables}\label{sec:tables}

\begin{table}[!ht]
\begin{tabular}{|cccl|}\hline
Line&$\ell$&Cycle Lengths&Remarks\\\hline\hline
1&$1$ &$k_1,k_2,k_3,k_4$&$k_1+k_2+k_3+k_4=m$\\\hline
2&$2$ &$mk$, $m(m-k)$&$\gcd(m,k)=1$\\
3&$2$ &$mk_1$, $mk_2$, $mk_3$&$m=k_1+k_2+k_3$ and\\
 & & &$\gcd(m,k_i)=1$ for $i=1,2,3$\\
4&$2$ &$k_1k_2$, $k_1(m-k_2)$, $k_2(m-k_1)$, & $k_1$ and $m-k_1$ coprime\\
 & &$(m-k_1)(m-k_2)$ &to $k_2$ and $m-k_2$\\

5&$2$ &$\frac{k_1m}{2}$, $\frac{k_1m}{2}$, $k_2m$, $k_3m$&$m=k_1+k_2+k_3$,\\
 & & & $\gcd(m,k_1)=2$,\\
 & & &$\gcd(m,k_i)=1$ for $i=2,3$\\
6&$2$ &$k_1m$, $k_2m$, $k_3m$, $k_4m$&$m=k_1+k_2+k_3+k_4$\\
 & & &$\gcd(m,k_i)=1$ for $i=1,2,3,4$\\
7&$2$ &$3$, $4$, $6$, $12$ or $1$, $8$, $8$, $8$&$m=5$\\
8&$2$ & $12$, $12$, $12$&$m=6$\\
9&$2$ & $16$, $16$, $16$, $16$&$m=8$\\\hline
10&$3$ &$mk_1k_2$, $mk_1(m-k_2)$, $mk_2(m-k_1)$,&$m,k_1,m-k_1,k_2$ and $m-k_2$\\
 & & $m(m-k_1)(m-k_2)$& pairwise coprime\\
11&$3$ &$5$, $40$, $40$, $40$ or $15$, $20$, $30$, $60$& $m=5$\\\hline
\end{tabular}\caption{Cycle lengths $(n_1,\ldots,n_N)$ for Theorem~\ref{MAIN}~(i)}\label{table1}
\end{table}

\begin{table}[!ht]
\begin{center}
\begin{tabular}{|cccll|}\hline
Line&$\ell$&$d$ & $q$ & Cycle Lengths \\ \hline\hline
$1$ & $1$& any & any &$\frac{q^d-1}{q-1}$ \\
$2$ & $1$ & even &odd &$\frac{1}{2}\left(\frac{q^d-1}{q-1}\right),\frac{1}{2}\left(\frac{q^d-1}{q-1}\right)$ \\
$3$ & $1$ &even &$\equiv 2\mod{3}$ &$\frac{1}{3}\left(\frac{q^d-1}{q-1}\right),\frac{1}{3}\left(\frac{q^d-1}{q-1}\right),\frac{1}{3}\left(\frac{q^d-1}{q-1}\right)$ \\
$4$ &$1$ &$3 \mid d$ &$\equiv 1\mod{3}$ &$\frac{1}{3}\left(\frac{q^d-1}{q-1}\right),\frac{1}{3}\left(\frac{q^d-1}{q-1}\right),\frac{1}{3}\left(\frac{q^d-1}{q-1}\right)$ \\
$5$ & $1$ &$4 \mid d$ &$\equiv 1\mod{4}$ &$\frac{1}{4}\left(\frac{q^d-1}{q-1}\right),\frac{1}{4}\left(\frac{q^d-1}{q-1}\right),\frac{1}{4}\left(\frac{q^d-1}{q-1}\right),\frac{1}{4}\left(\frac{q^d-1}{q-1}\right)$ \\
$6$ & $1$ &even &$\equiv 3\mod{4}$ &$\frac{1}{4}\left(\frac{q^d-1}{q-1}\right),\frac{1}{4}\left(\frac{q^d-1}{q-1}\right),\frac{1}{4}\left(\frac{q^d-1}{q-1}\right),\frac{1}{4}\left(\frac{q^d-1}{q-1}\right)$ \\
$7$ & $1$ &$d=d_1+d_2$ &any &$\frac{q^{d_1}-1}{q-1},\frac{q^{d_2}-1}{q-1}, \frac{(q^{d_1}-1)(q^{d_2}-1)}{q-1}$ \\
 & &$\gcd(d_1,d_2)=1$ & & \\
 & &$qd_1d_2$ even & & \\
$8$ & $1$ & $d=d_1+d_2$ & odd & $\frac{q^{d_1}-1}{q-1},\frac{q^{d_2}-1}{q-1}, \frac{(q^{d_1}-1)(q^{d_2}-1)}{2(q-1)}$, $\frac{(q^{d_1}-1)(q^{d_2}-1)}{2(q-1)}$\\
 & &$\gcd(d_1,d_2)=1$ & &\\
 & &$d_1,d_2$ odd & & \\
$9$ & $1$ &$2$ &prime &$1,q$ \\
$10$ & $1$ &$2$ &4 &$1,2,2$ \\
$11$& $1$ &$2$ &$8$ &$1,2,6$\\
$12$& $1$ &$2$ &$9$ &$1,3,3,3$, or $1,3,6$, or $2,4,4$, or $2,8$\\
$13$& $1$ &$2$&$16$& $1,8,8$, or $2,3,12$, or $2,5,10$\\
$14$& $1$ &$2$&$25$& $2,12,12$, or $1,5,10,10$\\
$15$& $1$ &$2$&$27$& $4,12,12$, or $1,9,9,9$\\
$16$& $1$ &$2$&$32$& $3,15,15$\\
$17$& $1$ &$2$&$49$& $2,16,16,16$\\
$18$& $1$ &$3$&$2$& $1,2,4$ \\
$19$& $1$ &$3$&$4$&$1,4,8,8$, or $7,14$\\
$20$& $1$ &$3$&$9$&$13,26,26,26$\\
$21$& $1$ &$4$ &$2$ &$3,6,6$ \\
$22$& $1$ &$4$ &$3$ &$4,12,12,12$ \\
$23$& $1$ &$4$ &$4$ &$10,15,30,30$\\\hline

$24$&$2$ &$d=d_1+d_2$ &$\gcd(q-1,d)=1$&$\frac{(q^d-1)(q^{d_1}-1)}{(q-1)^2}$, $\frac{(q^d-1)(q^{d_2}-1)}{(q-1)^2}$,\\
 & &$\gcd(d_1,d_2)=1$& &$\frac{(q^d-1)(q^{d_1}-1)(q^{d_2}-1)}{(q-1)^2}$\\

$25$&$2$ &$d=d_1+d_2$ &$\gcd(q-1,d)=2$&$\frac{(q^d-1)(q^{d_1}-1)}{(q-1)^2}$, $\frac{(q^d-1)(q^{d_2}-1)}{(q-1)^2}$,\\
 & &$\gcd(d_1,d_2)=1$& &$\frac{(q^d-1)(q^{d_1}-1)(q^{d_2}-1)}{2(q-1)^2}$, $\frac{(q^d-1)(q^{d_1}-1)(q^{d_2}-1)}{2(q-1)^2}$\\
$26$ & $2$&$2$&prime&$q+1,q(q+1)$\\
$27$ & $2$&$2$&prime&$\frac{q+1}{2},\frac{q+1}{2},\frac{q(q+1)}{2},\frac{q(q+1)}{2}$\\
$28$ & $2$ &$2$&$4$&$5,10,10$\\
$29$ & $2$ &$2$&$5$&$12,12,12$\\
$30$ & $2$ &$2$&$7$&$16,16,16,16$\\
$31$ & $2$ &$2$&$9$&$10,30,30,30$\\
$32$ & $2$ &$2$&$16$&$17,136,136$, or $34,51,204$, or $34,85,170$\\
$33$ & $2$ &$2$&$27$&$28,252,252,252$\\
$34$ & $2$ &$3$&$2$&$7,14,28$, or $7,14,14,14$\\
$35$ & $2$ &$3$&$4$&$21,84,168,168$\\\hline
\end{tabular}\caption{Cycle lengths $(n_1,\ldots,n_N)$ for Theorem~\ref{MAIN}~(ii)}\label{table2}
\end{center}
\end{table}

\begin{table}[!ht]
\begin{center}
\begin{tabular}{|ccll|}\hline
Socle Factor&$\ell$&degree& Cycle Lengths \\ \hline\hline
$\Alt(5)$&$1$ &$10$& $(5, 5)$, $(2, 4, 4)$, $(1, 3, 6)$, $(1, 3, 3, 3)$\\
 &$2$& $60$&$(15, 15, 15, 15)$\\\hline
$\Alt(6)$&$1$& $15$&$(5,5,5)$, $(3,6,6)$\\\hline
$\Alt(7)$&$1$&$15$ &$(5,5,5)$, $(3,6,6)$, $(1,7,7)$\\
 &$1$&$21$ &$(7,7,7)$, $(1,5,5,10)$, $(2,3,4,12)$, $(3,6,6,6)$\\\hline
$\Alt(8)$&$1$&$28$ &$(7,7,7,7)$, $(4,8,8,8)$, $(3,5,5,15)$\\
 &$1$&$35$ &$(5,15,15)$\\\hline
$\Alt(9)$&$1$&$36$&$(9,9,9,9)$\\\hline
\end{tabular}\caption{Cycle lengths for Theorem~\ref{MAIN}~(iii) with $T$ alternating}
\label{BIGtablea}
\end{center}
\end{table}

\begin{table}[!ht]
\begin{center}
\begin{tabular}{|cccl|}\hline
Socle Factor&$\ell$& degree& Cycle Lengths \\ \hline\hline
$M_{11}$&$1$ &$11$&$(11)$, $(1,5,5)$, $(2,3,6)$, $(1,2,8)$\\
 &$1$ &$12$& $(4, 8)$, $(1, 11)$, $(2, 2, 4, 4)$, $(1, 1, 5, 5)$, $(1, 2, 3, 6)$\\
 &$2$&$121$&$(11, 55, 55)$, $(22, 33, 66)$, $(11, 22, 88)$\\
 &$2$&$144$& $(4, 8, 44, 88)$\\\hline
$M_{12}$&$1$&$12$ & $(4, 8)$, $(6, 6)$, $(2, 10)$, $(1,11)$, $(3, 3, 3, 3)$, $(2, 2, 4, 4)$ \\
 & & &$(1, 1, 5, 5)$, $(1, 2, 3, 6)$, $(1, 1, 2, 8)$\\
 &$2$&$144$&$(6, 6, 66, 66)$, $(4, 8, 44, 88)$, $(2, 10, 22, 110)$\\\hline
$M_{22}$&$1$&$22$&$(11, 11)$, $(4, 6, 12)$, $(1, 7, 14)$, $(2, 10, 10)$, $(1, 7, 7, 7)$, $(2, 4, 8, 8)$ \\\hline
$M_{23}$&$1$&$23$&$(23)$, $(1, 11, 11)$, $(2, 7, 14)$, $(3, 5, 15)$\\
 &$2$&$529$&$(23,253,253)$, $(46, 161, 322)$, $(69, 115, 345)$\\\hline
$M_{24}$&$1$&$24$ &$(12, 12)$, $(3, 21)$, $(1,23)$, $ (6, 6, 6, 6)$, $(2, 2, 10, 10)$, $(1, 1, 11, 11)$\\
 & & &$(1, 2, 7, 14)$, $(1, 3, 5, 15)$, $(2, 4, 6, 12)$\\
 &$2$&$576$&$(12, 12, 276, 276)$, $(3, 21, 69, 483)$\\\hline
\end{tabular}\caption{Cycle lengths for Theorem~\ref{MAIN}~(iii) with $T$ sporadic}
\label{BIGtableb}
\end{center}
\end{table}

\begin{table}[!ht]
\begin{center}
\begin{tabular}{|ccll|}\hline
Socle Factor&$\ell$&degree& Cycle Lengths \\ \hline\hline
$\PSL_2(8)$& $1$&$28$&$(7,7,7,7)$, $(1,9,9,9)$\\
 & $1$&$36$&$(9,9,9,9)$\\\hline
$\PSL_2(11)$&$1$&$11$&$(11)$, $(1,5,5)$, $(2,3,6)$\\
 &$2$&$121$&$(11,55,55)$, $(22,33,66)$\\\hline
$\PSL_2(16)$&$1$&$68$& $(17, 17, 17, 17)$\\\hline

$\PSL_2(19)$&$1$&$57$&$(19,19,19)$\\\hline
$\PSL_4(3)$&$1$&$130$&$(10,40,40,40)$\\\hline

$\PSU_3(3)$&$1$&$28$& $( 7,7,7,7)$, $(1,3,12,12)$, $(4,8,8,8)$\\
 &$1$&$36$&$( 6,6,12,12 )$\\\hline
$\PSU_3(5)$&$1$&$50$& $( 5, 5, 20, 20 )$\\\hline
$\PSU_4(3)$&$1$&$112$&$(28,28,28,28)$\\\hline
$\PSp_6(2)$&$1$&$28$& $(7, 7, 7, 7)$, $(4, 8, 8, 8)$, $(1, 9, 9, 9)$, $(1, 3, 12, 12)$, $(3, 5, 5, 15)$\\
 &$1$&$36$& $( 9, 9, 9, 9 )$, $(6, 6, 12, 12 )$, $(1, 5, 15, 15 )$\\\hline
$\PSp_4(3)$&$1$&$27$&$(9,9,9)$, $(3,12,12)$\\
 &$1$&$36$&$(9,9,9,9)$, $(6,6,12,12)$\\
 &$1$&$40$&$(10,10,10,10)$, $(4,12,12,12)$\\\hline
\end{tabular}\caption{Cycle lengths for Theorem~\ref{MAIN}~(iii) with $T$ classical}
\label{BIGtablec}
\end{center}
\end{table}

\begin{remark}\label{remark1}
{\rm In order to read Tables~\ref{table1},~\ref{table2},~\ref{BIGtablea},~\ref{BIGtableb} and~\ref{BIGtablec} properly we need to make a few observations. First, in order to avoid much longer tables and duplicates we have taken into account the isomorphisms $\Alt(5)\cong \PSL_2(4)\cong \PSL_2(5)$, $\PSL_2(7)\cong \PSL_3(2)$, $\Alt(6)\cong \PSL_2(9)$, $\PSL_4(2)\cong \Alt(8)$ and $\PSU_4(2)\cong \PSp_4(3)$. So, for example, the permutation representation of degree $7$ of $\PSL_2(7)$ is considered in Table~\ref{table2} via the natural permutation representation of $\PSL_3(2)$. Analogously, the permutation representation of degree $6$ of $\PSL_2(9)$ is in Table~\ref{table1} via the natural permutation representation of $\Alt(6)$. So, the reader has to take into account these six isomorphisms in order to read our tables accurately.

Second, a group $\PSL_d(q)$ in its natural representation may appear in more than one row of Table~\ref{table2}. For example $\PSL_4(3)$ appears in Lines~$1$,~$2$,~$6$,~$7$ and~$8$ (by taking $d_1=1$ and $d_2=3$) and~$21$. The permutations of $\PGL_4(3)$ having at most four cycles are obtained by considering the contribution of all of these lines. A similar remark applies to any other group. }
\end{remark}

\newpage

\newpage

\section{The action of $\Sym(m)$ on $k$-sets}\label{sec2}
The main result of this section is Proposition~\ref{main-sym}, which determines the elements of $\Sym(m)$ having at most four cycles in their action on $k$-sets; that is, on the set of $k$-element subsets of $\{1,\ldots, m\}$. So, throughout this section $H$ denotes either $\Sym(m)$ or $\Alt(m)$ in its action on $k$-sets. Replacing $k$ by $m-k$ if necessary, we may assume that $k\leq m/2$. Our arguments exploit the interplay between the action of $H$ on $k$-sets and the action of $H$ on $\{1,\ldots,m\}$. To avoid confusion, for an element $g\in H$, by ``a cycle of $g$," we mean a cycle of $g$ in its action on $k$-sets; and by ``a cycle of $g$ on $\{1,\ldots,m\}$," we mean a cycle of $g$ in its natural action on $\{1,\ldots,m\}$. Note that, in this section, we do not insist that $H$ is primitive on $k$-sets, that is, we allow $m=2k$.

Our first lemma is a simple computation. It will be used in subsequent results to show that whenever a permutation $g$ has a long cycle in its action on $\{1, \ldots, m\}$, it must have at least $5$ distinct cycles in its action on $k$-sets.

\begin{lemma}\label{bound-necklaces}
If $g\in H$ has a cycle $C$ of length $m'$ in its action on $\{1, \ldots, m\}$, then the $k$-sets that contain $k'$ elements from $C$
lie in at least $\binom{m'}{k'}/m'$ distinct cycles of $g$.
\end{lemma}

\begin{proof}
The $k$-sets that intersect $C$ in $k'$ elements form a union of cycles of $g$.  Moreover, the $k$-sets in any one of these cycles of $g$ intersect $C$ in at most $m'$ distinct $k'$-subsets.
Since we can choose $k'$ elements from $C$ in $\binom{m'}{k'}$ different ways, the number of these $g$-cycles is at least $\binom{m'}{k'}/m'$.
\end{proof}

We are now ready to show that if $m >7$, then the groups $\Alt(m)$ and $\Sym(m)$ have no elements with at most four cycles in their actions on $k$-sets, for $k \ge 3$.

\begin{lemma}\label{sym-kge3}
If $H$ contains a permutation $g$ with at most four cycles and $k \ge 3$, then $m \le 7$.
\end{lemma}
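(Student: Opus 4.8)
The plan is to prove the contrapositive: assuming $m \ge 8$ and (by the standing convention of this section) $3 \le k \le m/2$, I will show that every $g \in H$ has at least five cycles on $k$-sets. The whole argument is organized around the length $m'$ of a longest cycle of $g$ in its natural action on $\{1,\dots,m\}$, fed into Lemma~\ref{bound-necklaces}, and splits into three ranges of $m'$.

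First, the long-cycle case $m' \ge 7$. Fix a longest cycle $C$ and apply Lemma~\ref{bound-necklaces}; the only delicate point is choosing an admissible $k'$, noting that a $k$-set meeting $C$ in exactly $k'$ points exists iff $k' \le m'$ and $k - k' \le m - m'$. If there is room, namely $m' \le m-k+3$, I take $k'=3$ (legitimate since $k \ge 3$ and $m' \ge 7$), and Lemma~\ref{bound-necklaces} yields at least $\binom{m'}{3}/m' = (m'-1)(m'-2)/6 \ge 5$ cycles. Otherwise $m' \ge m-k+4 \ge k+4$ (using $k \le m/2$), so $k \le m'-4 < m'$ and I may instead take $k'=k$; since then $3 \le k \le m'-4$ forces $\binom{m'}{k} \ge \binom{m'}{3}$, at least $5$ cycles again result. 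This settles $m'\ge7$ with two clean estimates.

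Next, the short-cycle case $m' \le 6$, which I expect to be the main obstacle: Lemma~\ref{bound-necklaces} applied to a single cycle is now too weak, since $\binom{m'}{j}/m' < 5$ for all $j$ when $m' \le 6$. The idea is to aggregate. For a fixed longest cycle $C$, the $k$-sets meeting $C$ in exactly $j$ points, as $j$ ranges over the feasible values $\max(0,k-(m-m')) \le j \le \min(k,m')$, partition the $k$-sets into disjoint $g$-invariant families; hence the number of $g$-cycles is at least $\sum_j \max\!\big(1,\binom{m'}{j}/m'\big)$, where each term uses Lemma~\ref{bound-necklaces} and each feasible $j$ contributes a nonempty family. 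For $4 \le m' \le 6$ a short finite verification over the feasible ranges shows this sum is always at least $5$ (the tight case being $m'=4$, $k=3$, where the contributions $1,1,2,1$ from $j=0,1,2,3$ sum to exactly $5$). When $m' \le 3$ I argue differently: every natural cycle has length at most $3$, so $|g|$ divides $\lcm(1,2,3)=6$, and since every cycle on $k$-sets has length dividing $|g|$, the number of cycles is at least $\binom{m}{k}/6 \ge \binom{8}{3}/6 > 5$.

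The only genuine work is therefore the short-cycle bookkeeping for $m' \in \{4,5,6\}$, namely checking $\sum_j \max(1,\binom{m'}{j}/m') \ge 5$. This is a finite check because $m' \le 6$ and $k \le m/2$ tightly constrain both the feasible range of $j$ and the relevant small values of $(m,k)$: for $m$ large the interior term $\binom{m'}{\lfloor m'/2\rfloor}/m'$ already overshoots, so only the boundary cases $m \in \{8,9\}$ need to be examined, and I would record these in a small table. Everything else reduces to the two inequalities $(m'-1)(m'-2)/6 \ge 5$ and $\binom{m}{k}/6 > 5$.
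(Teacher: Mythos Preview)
Your argument is correct and, in the short-cycle range, genuinely different from the paper's. For $m'\ge 7$ both you and the paper feed $k'=3$ (or a nearby value) into Lemma~\ref{bound-necklaces} and use $(m'-1)(m'-2)/6\ge 5$; your dichotomy on whether $m'\le m-k+3$ is a clean way to guarantee an admissible $k'$, where the paper instead argues indirectly that some $k'$ with $3\le k'\le \lfloor m'/2\rfloor$ exists. For $m'\le 6$ the paper takes a coarser route: it bounds $|g|\le \lcm(1,\dots,6)=60$, deduces at least $\binom{m}{3}/60>4$ cycles for $m\ge 13$, and then disposes of $8\le m\le 12$ by a \texttt{magma} check. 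Your aggregation over intersection sizes $j$ replaces that computer check by hand, which is a real gain.

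Two small points to tighten. First, your displayed lower bound $\sum_j \max(1,\binom{m'}{j}/m')$ is not literally what you use: in the tight case $m'=4$, $k=3$ it evaluates to $1+1+1.5+1=4.5$, yet you (correctly) record the $j=2$ contribution as $2$. The fix is to write $\sum_j \lceil \binom{m'}{j}/m'\rceil$, which is justified because the cycle count in each $j$-stratum is an integer. Second, your remark that ``for $m$ large the interior term already overshoots'' is not quite right (that term is at most $4$ for $m'\le 6$ and does not depend on $m$); what actually happens is that for $k\ge m'$ the feasible range is all of $\{0,\dots,m'\}$ and the full sum is $6,8,14$ for $m'=4,5,6$, so only $k\in\{3,\dots,m'-1\}$ and the smallest admissible $m$ need checking. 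With those cosmetic fixes your finite verification goes through exactly as you sketch.
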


\begin{proof}
Suppose that in the disjoint cycle representation for $g$ on $\{1, \ldots, m\}$, some cycle $C$ has length $m' \ge 7$. Let $c$ denote the set of elements of $C$, so $|c|=m'$. We claim that there exists a $k$-set $x$ with $|x\cap c|\leq m'/2$. If for every $k$-set $x$ we had $|x\cap c|>m'/2$, then $\{1,\ldots,m\}\setminus c$ would have size less than $k$ and, furthermore, any $k$-set $x$ containing $\{1,\ldots,m\}\setminus c$ would intersect $c$ in a set of size at least $\lfloor m'/2\rfloor +1$. Therefore, $(\lfloor m'/2\rfloor +1)+(m-m')\leq k$. Since $k\leq m/2$, we obtain a contradiction and our claim is proved.  Note that this means that $m-m' \ge |x\setminus(x\cap c)| \ge k-m'/2$; that is, $k \le m-m'/2$.

Recall that $k \ge 3$.  If $k-(m-m') \ge 3$ then let $x$ be a $k$-set whose intersection with $c$ is as small as possible.  If $k-(m-m') <3$ then let $x$ consist of $3$ elements of $c$ and $k-3$ elements that are not in $c$ (this is possible since $k-3<m-m'$).  Let $k'=|x\cap c|$; in both cases, we see that $3 \le k' \le \lfloor m'/2 \rfloor$.
Now, for $3 \le k' \le \lfloor m'/2 \rfloor$, the binomial coefficient $\binom{m'}{k'}$ is strictly increasing with $k'$. By Lemma~\ref{bound-necklaces}, there are at least $\binom{m'}{k'}/m'$ distinct cycles of $g$ on $k$-sets whose intersection with $c$ has cardinality $k'$. We obtain \[\binom{m'}{k'}/m' \ge \binom{m'}{3}/m'=\frac{(m'-1)(m'-2)}{6}\ge \frac{6\cdot 5}{6}=5,\]
contradicting the assumption that $g$ has at most four cycles. We conclude that every cycle of $g$ on $\{1,\ldots,m\}$ has length at most $6$. It follows that $|g| \le \lcm\{2,3,4,5,6\}=60$.

In the remainder of the proof, we assume towards a contradiction that $m \ge 8$.  Now the number of 
$k$-sets is $\binom{m}{k}\ge \binom{m}{3}$. So the number of cycles on $k$-sets 
is at least $\binom{m}{3}/60$, which is greater than $4$ when  $m \ge 13$. 
For $m=8,9,10,11,12$ we can replace $60$ by the exact value for the maximum element order $n$ in $\Sym(m)$ and  
replace $3$ with $k$ for each $k$ between $3$ and $m/2$. We find that the only  possibilities with $\binom{m}{k}/n \le 4$ are  $m=8,10,12$ with $k=3$. These three remaining cases are easily eliminated in $\mathtt{magma}$.
\end{proof}

We now consider the action of symmetric and alternating groups on $2$-sets. Again, the groups must be very small to contain elements that have at most four cycles.

\begin{lemma}\label{sym-k=2}
If $k =2$ and $H$ contains a permutation $g$ that has at most four cycles, then $m \le 9$.
\end{lemma}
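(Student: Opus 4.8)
The plan is to mimic the strategy used for $k\geq 3$ in Lemma~\ref{sym-kge3}, but now with the sharper counting that the case $k=2$ permits. The key quantitative input is again Lemma~\ref{bound-necklaces}: if $g$ has a cycle $C$ of length $m'$ in its action on $\{1,\ldots,m\}$, then the $2$-sets meeting $C$ in $k'$ points lie in at least $\binom{m'}{k'}/m'$ distinct cycles of $g$ on $2$-sets. First I would use this to force all cycles of $g$ on $\{1,\ldots,m\}$ to be short. Taking $k'=2$, the $2$-sets contained in a single cycle $C$ of length $m'$ already lie in at least $\binom{m'}{2}/m'=(m'-1)/2$ cycles of $g$; requiring this to be at most $4$ gives $m'\leq 9$. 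So every cycle length of $g$ on $\{1,\ldots,m\}$ is at most $9$, and in particular $|g|\leq \lcm\{1,2,\ldots,9\}=2520$.

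Next I would refine the bound on the lengths by exploiting pairs of cycles. If $g$ has two cycles $C_1,C_2$ on $\{1,\ldots,m\}$ of lengths $m_1,m_2$, then the $2$-sets with one point in each of $C_1,C_2$ form a union of $g$-cycles, and each such cycle has length dividing $\lcm(m_1,m_2)$, so these $2$-sets lie in at least $m_1m_2/\lcm(m_1,m_2)=\gcd(m_1,m_2)$ cycles. Combined with the ``within a single cycle'' count $(m_i-1)/2$ (rounded appropriately) from the previous paragraph, a handful of cycles of even moderate length quickly pushes the total number of $2$-set cycles past $4$. The main work is a careful bookkeeping: I would sum, over all single cycles $C_i$ the contribution $\lceil \binom{m_i}{2}/m_i\rceil$ together with the fixed points of $C_i$ on $2$-sets, and over all pairs the contribution $\gcd(m_i,m_j)$, and show that the requirement that this total be at most $4$ forces both the number of nontrivial cycles of $g$ and their lengths to be very small, hence $m$ small.

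With the cycle type of $g$ on $\{1,\ldots,m\}$ thus constrained to a short list, I would finish by a direct counting argument to bound $m$. Once every cycle length is at most $9$ and the number of sufficiently long cycles is bounded, the number of $2$-sets $\binom{m}{2}$ must not exceed $4\cdot|g|$, and since $|g|$ is itself bounded by the (now small) admissible cycle types, this yields an explicit upper bound on $m$. For the remaining finitely many values of $m$ (those small enough that the crude inequalities do not already give a contradiction) and each $k=2$, I would replace the generic order bound by the exact maximum element order $\meo(\Sym(m))$ and check the inequality $\binom{m}{2}/\meo(\Sym(m))\leq 4$ to trim the list down to $m\leq 9$, with the last few borderline cases verified in \texttt{magma}, exactly as at the end of the proof of Lemma~\ref{sym-kge3}.

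I expect the main obstacle to be the pair-of-cycles bookkeeping: unlike the $k\geq 3$ case, where a single long cycle already produced at least $5$ cycles on $k$-sets, for $2$-sets a single cycle of length up to $9$ is permitted, so one cannot conclude from one cycle alone. The delicate part is therefore combining the single-cycle contributions with the $\gcd$ contributions from pairs to rule out configurations such as several short cycles of pairwise coprime length, which individually look harmless but whose cross-terms must be shown to exceed four; I would organise this by a small case analysis on the multiset of cycle lengths not exceeding $9$, rather than attempting a single uniform inequality.
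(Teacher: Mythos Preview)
Your proposal is correct, and in fact it takes a somewhat different route from the paper's own proof. The paper argues as follows: assuming $m\geq 10$, a single cycle $C$ of length $m'\geq 6$ already yields five orbits on $2$-sets, by counting not only the $\lfloor m'/2\rfloor$ orbits of $2$-sets contained in $C$ but also at least one orbit for each admissible intersection size $i<2$ (which exist because $m\geq 10$ guarantees enough points outside $C$). This forces every cycle of $g$ to have length at most $5$, whence $|g|\leq\lcm\{2,3,4,5\}=60$; then $\binom{m}{2}/60>4$ disposes of $m\geq 23$, and the range $10\leq m\leq 22$ is finished in \texttt{magma}.

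Your approach differs in that you extract only $m'\leq 9$ from the single-cycle count, but compensate by bringing in the cross-cycle contributions $\gcd(m_i,m_j)$ for pairs of cycles. This is a genuinely different decomposition, and it is actually sharper than you seem to realise: since each pair of cycles contributes at least one orbit, the bound $\binom{r}{2}\leq 4$ forces $g$ to have at most three cycles on $\{1,\ldots,m\}$, and a short case check on $r\in\{1,2,3\}$ with all $m_i\leq 9$ then gives $m\leq 9$ directly. In particular, your planned fallback to $\binom{m}{2}/\meo(\Sym(m))$ and \texttt{magma} is unnecessary if you carry the $\gcd$ bookkeeping through; the argument becomes entirely computation-free. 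So your route trades the paper's ``one long cycle plus several intersection sizes'' idea for a ``within-cycle plus between-cycle'' count, and what it buys is the elimination of the computer check at the end.
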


\begin{proof}
Towards a contradiction, suppose that  $m \ge 10$. Then since $k =2$, for a subset $x$ of $\{1, \ldots, m\}$ of cardinality $k$, there are at least $8$ elements of $\{1, \ldots, m\}$ that are not in $x$.

Suppose that there is a cycle $C$ of length $m' \ge 10$ in the disjoint cycle representation for $g$. Let $c$ be the set of elements in $C$. Then since $k=2$, it is possible to choose $x$ so that $x \cap c$ has cardinality $2$. By Lemma~\ref{bound-necklaces}, $g$ has at least $\lfloor m'/2 \rfloor$ orbits on $k$-subsets of $\{1, \ldots, m\}$ whose intersection with $c$ has cardinality $2$. Since $m'\ge 10$ we have $\lfloor m'/2 \rfloor \ge 5$, which is a contradiction.
	
If there is a cycle $C$ of length $m'$ in the disjoint cycle representation for $g$, with $6 \le m' \le 9$, then since $k=2$, it is possible to choose $x$ so that $|x \cap c|$ takes on any value between
$\max\{0,m'-8\}$ and $2$ (inclusive). Now $g$ has at least $\lfloor m'/2 \rfloor$ orbits on $k$-subsets  of $\{1, \ldots, m\}$ whose intersection with $c$ has cardinality $2$,
 and at least one orbit on $k$-subsets of $\{1, \ldots, m\}$ whose intersection with $c$ has cardinality $i$ as long as $m'-8 \le i <2$ (and $i \ge 0$). In each case, this gives a total of at least five orbits of $g$, which is a contradiction.
 
 Thus we may assume that each cycle of $g$ has length at most $5$, and hence that $|g| \le \lcm \{2,3,4,5\} =60$. Arguing as in Lemma~\ref{sym-kge3}, we have $\binom{m}{2} /60 >4$ for $m \ge 23$ so in these cases $g$ has at least five cycles. The remaining cases (where $10 \le m \le 22$) are easily eliminated using \texttt{magma}.

\end{proof}

The following proposition summarises the previous results.

\begin{proposition}\label{main-sym}
Let $H$ be $\Alt(m)$ or $\Sym(m)$ in its natural action on the $k$-subsets of
$\{1,\ldots,m\}$ with $k\leq m/2$. If $H$ contains a permutation $g$ that has at most four cycles in this action then either $k=3$ and $m=6$; or $k=2$ and $m \le 9$; or
$k=1$.
\end{proposition}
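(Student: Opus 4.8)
The plan is to assemble Proposition~\ref{main-sym} directly from the three preceding lemmas by organizing a case analysis on the value of $k$, recalling throughout that we have normalized to $k\leq m/2$. The claimed conclusion partitions into exactly the three ranges $k\geq 3$, $k=2$, and $k=1$, so I would structure the proof to dispatch each in turn and verify that nothing in the $k\geq 3$ range survives except the single configuration $k=3$, $m=6$.

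First I would handle $k\geq 3$. By Lemma~\ref{sym-kge3}, if $H$ contains a permutation $g$ with at most four cycles and $k\geq 3$, then $m\leq 7$. Combined with the standing assumption $k\leq m/2$, this forces $k=3$ and $m\in\{6,7\}$, since $k\geq 3$ and $m/2\geq k\geq 3$ give $m\geq 6$, while $m\leq 7$ leaves only $m=6$ or $m=7$ (and $k=3$ is the only integer with $3\leq k\leq m/2$ in both cases). I would then need to eliminate the residual case $m=7$, $k=3$: here $H$ acts on the $\binom{7}{3}=35$ many $3$-subsets, and one checks (by the order bound $|g|\leq\lcm\{2,\ldots,6\}=60$ already established in the proof of Lemma~\ref{sym-kge3}, or by a short direct inspection of cycle types in $\Sym(7)$) that no element of $\Sym(7)$ produces four or fewer cycles on $35$ points. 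This leaves $k=3$, $m=6$ as the only admissible possibility in this range.

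Next I would dispatch $k=2$. By Lemma~\ref{sym-k=2}, if $k=2$ and $H$ contains a permutation with at most four cycles, then $m\leq 9$; since $k=2$ requires $m\geq 4$ under $k\leq m/2$, the conclusion is simply $k=2$ and $m\leq 9$, matching the statement verbatim. Finally, the case $k=1$ is the natural action on $\{1,\ldots,m\}$ itself, which imposes no further restriction and appears unconditionally in the disjunction. Collecting the three cases yields exactly the stated trichotomy.

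The only step requiring genuine care is the elimination of $m=7$, $k=3$, which Lemma~\ref{sym-kge3} leaves open (that lemma only asserts $m\leq 7$, not $m\leq 6$). I expect this to be the main obstacle, though a minor one: it is settled either by the arithmetic that $\binom{7}{3}/60<1$ is \emph{not} itself decisive, so one instead argues that in $\Sym(7)$ the only element orders are divisors of $\lcm\{2,3,4,5,6,7\}=420$ with large cycles forcing more than four $3$-set cycles via Lemma~\ref{bound-necklaces} (a $7$-cycle already yields $\binom{7}{3}/7=5$ cycles on the relevant $3$-sets), and a finite check over the remaining cycle types of $\Sym(7)$ completes the argument. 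This is precisely the kind of small finite verification that the preceding lemmas were designed to reduce to, so no new ideas are needed beyond invoking Lemmas~\ref{sym-kge3} and~\ref{sym-k=2} and clearing the single leftover degree.
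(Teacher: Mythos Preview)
Your proposal is correct and follows essentially the same route as the paper: invoke Lemma~\ref{sym-kge3} to reduce the $k\geq 3$ case to $k=3$, $m\in\{6,7\}$, eliminate $m=7$ by a direct check, and then apply Lemma~\ref{sym-k=2} for $k=2$. The paper disposes of $(k,m)=(3,7)$ with the phrase ``a straightforward computation,'' whereas you sketch how such a check might go; otherwise the arguments are identical.
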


\begin{proof}
By Lemma~\ref{sym-kge3}, if $k \ge 3$, then $m \le 7$. Since $k \le m/2$, we must have $k=3$ and $m \in \{6,7\}$. The case $k=3$ and $m=7$ is eliminated with a straightforward computation. Therefore we may assume that $k \le 2$. By Lemma~\ref{sym-k=2}, if $k=2$ then $m \le 9$.
\end{proof}

\begin{remark}\label{sym6}
{\rm Notice that the action of $\Sym(6)$ on $3$-sets is imprimitive.  This group does contain permutations with at most four cycles (for example a $6$-cycle in $\Sym(6)$ has cycle lengths $2,6,6,6$, and a $5$-cycle has cycle lengths $5,5,5,5$), but there is no primitive overgroup, so these elements do not appear in our tables. 
}\end{remark}

\section{$\PGammaL_d(q)$ in its natural action}\label{sec3}

In this section we study $\PGammaL_d(q)$ acting on the points or hyperplanes of the projective space $\PGdq$ with $d \ge 2$, and we determine the cycle structure of its elements that have at most four cycles. Since these actions are permutationally isomorphic, it is sufficient to study the action on points. In order to do so, we introduce some notation (we follow~\cite[Section~$2$]{GMPSorders}).

\begin{notation}\label{not1}{\rm 
Let $s$ be a semisimple element of $\PGL_d(q)$ and let $\bar{s}$ be a semisimple 
element of $\GL_d(q)$ projecting to $s$ in $\PGL_d(q)$. The action of the matrix 
$\bar{s}$ on the $d$-dimensional vector space $V=\mathbb{F}_{q}^d$ naturally 
defines the structure of an $\mathbb{F}_{q}\langle \bar{s}\rangle$-module on $V$. 
Since $\bar{s}$ is semisimple, $V$ decomposes, by Maschke's theorem, as a direct 
sum of irreducible $\mathbb{F}_{q}\langle \bar{s}\rangle$-modules, that is, 
$V=V_1\oplus\cdots\oplus V_{l}$, with $V_i$ an irreducible $\mathbb{F}_{q}\langle 
\bar{s}\rangle$-module. Relabelling the index set $\{1,\ldots,l\}$ if necessary, 
we may assume that the first $t$ submodules $V_1,\ldots,V_t$ are pairwise non-isomorphic 
(for some $t\in\{1,\ldots,l\}$) and that for $j\in\{t+1,\ldots,l\}$, $V_j$ is isomorphic 
to some $V_i$ with $i\in\{1,\ldots,t\}$. Now, for $i\in \{1,\ldots,t\}$, let $\mathcal{W}_i
=\{W\leq V\mid W\cong V_i\}$, the set of $\mathbb{F}_{q}\langle \bar{s}\rangle$-submodules 
of $V$ isomorphic to $V_i$ and write $W_i=\sum_{W\in \mathcal{W}_i}W$. The module $W_i$ is 
usually referred to as the \emph{homogeneous} component of $V$ corresponding to the simple 
submodule $V_i$. We have $V=W_1\oplus \cdots \oplus W_t$. Set $a_i=\dim_{\mathbb{F}_{q}}(W_i)$. 
Since $V$ is completely reducible, we have $W_i=V_{i,1}\oplus\cdots\oplus V_{i,m_i}$ for some 
$m_i\geq 1$, where $V_{i,j}\cong V_i$, for each $j\in \{1,\ldots,m_i\}$. Thus we have $a_i=
d_im_i$, where $d_i=\dim_{\mathbb{F}_{q}}V_i$, and $
\sum_{i=1}^{t}d_im_i=d$. For 
$i\in \{1,\ldots,t\}$, we let $x_i$ (respectively $y_{i,j}$) denote the element in $\GL(W_i)$ 
(respectively $\GL(V_{i,j})$) induced by the action of $\bar{s}$ on $W_i$ (respectively 
$V_{i,j}$). In particular, $x_i=y_{i,1}\cdots y_{i,m_i}$ and $\bar{s}=x_1\cdots x_t$.
We note further that 
\[ 
p(s)=(\underbrace{d_1,\ldots,d_1}_{m_1\textrm{ times}},\underbrace{d_2,\ldots,d_2}_{m_2\textrm{ times}},
\ldots,\underbrace{d_t,\ldots,d_t}_{m_t\textrm{ times}}) 
\]
is a partition of $n$. 

We conclude this notation by introducing some basic facts about $\cent {\GL_d(q)}{\bar{s}}$, the centralizer of $\bar{s}$ in $\GL_d(q)$, that will be used later in the proof of Theorem~\ref{thm}. Let $c \in \cent {\GL_d(q)}{\bar{s}}$. Given $i\in \{1,\ldots,t\}$ and 
$W\in \mathcal{W}_i$, we see that $W^c$ is an $\mathbb{F}_{q}\langle \bar{s}\rangle$-submodule 
of $V$ isomorphic to $W$ (because $c$ commutes with $\bar{s}$). Thus $W^c\in\mathcal{W}_i$. This shows 
that $W_i$ is $\cent{\GL_d(q)}{\bar{s}}$-invariant. It follows that 
\[ 
\cent{\GL_d(q)}{\bar{s}}=\cent{\GL(W_1)}{x_1}\times\cdots\times \cent{\GL(W_t)}{x_t} 
\] 
and every unipotent element of $\cent {\GL_d(q)}{\bar{s}}$ is of the 
form $u=u_1\cdots u_t$ with $u_i\in \cent{\GL(W_i)}{x_i}$ unipotent in $\GL(W_i)$, for each $i$. 

Now~\cite[Proposition~$2.6$]{GMPSorders} implies that $|u|\leq \max\{p^{\lceil \log_p(m_1)\rceil},\ldots,p^{\lceil \log_p(m_t)\rceil}\}$.  (Although~\cite[Proposition~$2.6$]{GMPSorders} is stated for $\PGL_d(q)$, its proof applies verbatim to $\GL_d(q)$ also.)
}
\end{notation}

We start with a preliminary number-theoretic lemma.

\begin{lemma}\label{numbertheory}
Let $q\geq 2$ and $d_1,d_2\geq 1$ be integers with $\gcd(d_1,d_2)=1$. Then there exists an integer $c$ with $\gcd(c,q-1)=1$ and
\[
\gcd(cd_1-d_2,q-1)=\left\{
\begin{array}{lll}
2&&\textrm{if }q,d_1,d_2 \textrm{ are odd},\\
1&&\textrm{otherwise}.
\end{array}
\right.
\]
\end{lemma}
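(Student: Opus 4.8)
The plan is to prove this by the Chinese Remainder Theorem, analyzing the congruence $c d_1 \equiv d_2 \pmod{r}$ separately for each prime power $r$ dividing $q-1$. The two constraints on $c$ are that $\gcd(c,q-1)=1$ and that $\gcd(cd_1-d_2,q-1)$ takes the prescribed value; both conditions are ``local'' in the sense that they are determined by the residues of $c$ modulo the prime-power factors of $q-1$. So first I would factor $q-1=\prod_p p^{e_p}$ and treat each prime $p$ dividing $q-1$ independently, then reassemble a global $c$ by CRT. The key arithmetic leverage is the hypothesis $\gcd(d_1,d_2)=1$, which guarantees that $d_1$ and $d_2$ cannot both be divisible by any common prime $p$; this is exactly what lets me steer the value of $cd_1-d_2$ modulo $p$.

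I would organise the local analysis according to how a prime $p\mid q-1$ relates to $d_1$ and $d_2$. If $p\nmid d_1$, then $d_1$ is invertible modulo $p^{e_p}$, so I can prescribe $cd_1\bmod p^{e_p}$ freely; in particular I can arrange $cd_1\equiv d_2+1$ (forcing $p\nmid cd_1-d_2$) while also ensuring $p\nmid c$ (choose the target residue of $c$ to be a unit, which is possible since there are $\varphi(p^{e_p})\ge 1$ units and I have freedom in the choice). If instead $p\mid d_1$, then since $\gcd(d_1,d_2)=1$ we must have $p\nmid d_2$, so automatically $cd_1-d_2\equiv -d_2\not\equiv 0\pmod p$ for \emph{every} choice of $c$; here the divisibility by $p$ is simply unavoidable to control and, crucially, never occurs. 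In both cases I can simultaneously keep $c$ coprime to $p$. The one prime that needs separate care is $p=2$: that is precisely the source of the dichotomy in the statement, and it is where I expect the main obstacle to lie.

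The delicate point is the behaviour at $p=2$ when $q$ is odd (so $2\mid q-1$). If either $d_1$ or $d_2$ is even, then since $\gcd(d_1,d_2)=1$ exactly one of them is even, and a short parity check shows $cd_1-d_2$ can be made odd for odd $c$, giving $\gcd(cd_1-d_2,q-1)$ odd at the prime $2$ and landing in the ``otherwise'' case. But if $q,d_1,d_2$ are all odd, then for any odd $c$ the quantity $cd_1-d_2$ is a difference of two odd numbers, hence even, so $2\mid \gcd(cd_1-d_2,q-1)$ is \emph{forced}; this is the genuine content of the first case of the lemma, and it explains why the gcd must be $2$ rather than $1$ there. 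The remaining work is to check that in this all-odd case the gcd is exactly $2$ and not a larger power of $2$ or divisible by an odd prime: I would handle the odd primes exactly as above, and at $p=2$ arrange $cd_1-d_2\equiv 2\pmod 4$ by choosing $c$'s residue modulo the relevant power of $2$ appropriately (using that $d_1$ is an odd unit modulo $2^{e_2}$). Assembling all the local choices via CRT produces a single $c$ meeting every requirement, and since every residue condition I impose is satisfiable and the constraint $\gcd(c,q-1)=1$ is compatible with them, the construction goes through; the only real subtlety throughout is the unavoidable factor of $2$ in the all-odd case, which is baked into the statement rather than an obstruction to the proof.
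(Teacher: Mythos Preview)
Your proposal is correct and follows essentially the same approach as the paper: factor $q-1$, handle each prime locally, treat $p=2$ separately to produce the dichotomy, and reassemble via the Chinese Remainder Theorem. The only imprecision is that the specific target $cd_1\equiv d_2+1\pmod{p}$ can fail to give a unit $c$ when $p\mid d_2+1$; the paper fixes this by choosing $\varepsilon_i\in\{1,-1\}$ with $p_i\nmid d_2+\varepsilon_i$ (always possible for odd $p_i$), which is exactly the ``freedom in the choice'' you allude to.
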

\begin{proof}
%
%
 If $q=2$, we may choose $c=1$. Now suppose that $q>2$ and write $q-1=p_1^{\alpha_1}\cdots p_t^{\alpha_t}$ with $p_1,\ldots,p_t$ distinct primes and $\alpha_1,\ldots,\alpha_t\geq 1$.

Observe that for each $p_i>2$, either $d_2+1$ or $d_2-1$ is coprime to $p_i$. In particular, for $p_i>2$, choose $\varepsilon_i\in \{1,-1\}$ with $d_2+\varepsilon_i$ coprime to $p_i$.

Now using the Chinese Reminder Theorem, we choose $c\in \mathbb{Z}$ with 
\begin{align*}
&cd_1\equiv d_2+\varepsilon_i\mod p_i &&\textrm{if  }p_i \textrm{ is coprime to }d_1, \textrm{or}\\
&c\equiv 1\mod p_i&&\textrm{if } p_i \textrm{ divides }d_1,
\end{align*}
 for each $p_i>2$ and with
\begin{align*}
&cd_1\equiv d_2+2\mod 4 &&\textrm{if  }d_1 \textrm{ and }d_2 \textrm{ are odd}, \textrm{or}\\
&c\equiv 1\mod p_i &&\textrm{if } d_1 \textrm{ or }d_2 \textrm{ is even}
\end{align*}
if $p_i=2$.
 Since $\gcd(d_1,d_2)=1$, it follows that $c$ and $cd_1-d_2$ are both coprime to $p_i$ when $p_i>2$. In particular, the result follows for $q$ even. If $q$ is odd then there there is a $p_i$ equal to $2$ and so $c$ is odd so $\gcd(c,q-1)=1$. Moreover $cd_1-d_2$ is either congruent to $2\mod 4$ (when $d_1,d_2$ are odd, in which case $\gcd(cd_1-d_2,q-1)=2$) or odd (when $d_1$ or $d_2$ is even, in which case $\gcd(cd_1-d_2,q-1)=1$).
\end{proof}

\begin{theorem}\label{thm}
If $\PGammaL_d(q)$ contains a permutation $g$ with at most four cycles, then $d$, $q$ and the cycle lengths of $g$ are as in Table~\ref{table2} with $\ell=1$.
\end{theorem}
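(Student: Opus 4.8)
The plan is to reduce an arbitrary $g\in\PGammaL_d(q)$ with at most four cycles to a multiplicity-free semisimple linear element, to read off its cycle structure from the homogeneous decomposition of Notation~\ref{not1}, and then to show that four cycles forces either a power of a Singer cycle or a two-component element, with all remaining possibilities confined to small $(d,q)$ that can be examined directly. Throughout I would use the pigeonhole bound $|g|\ge n/4$ with $n=(q^d-1)/(q-1)$, together with the constraints on elements of large order coming from~\cite{GMPSorders}; since any $g$ lies in $\PGammaL_d(q)$, treating this full group suffices.

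First I would dispose of the unipotent and semilinear contributions. Writing $g=su$ with $s$ semisimple and $u$ unipotent commuting, and assuming for the moment $g\in\PGL_d(q)$, the last observation of Notation~\ref{not1} gives $|u|\le\max_i p^{\lceil\log_p m_i\rceil}$, so $u\ne 1$ forces some multiplicity $m_i\ge 2$. A repeated irreducible constituent means $\dim W_i\ge 2d_i$, and the points of $\mathbb{P}(W_i)$ together with the mixed points then distribute into at least five $\langle g\rangle$-orbits unless $d$ and $q$ are small, which I would quantify by comparing the longest possible cycle length against $n/4$. Thus outside a bounded range of $(d,q)$ the element $g$ is semisimple and multiplicity-free. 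The field-automorphism case $g\in\PGammaL_d(q)\setminus\PGL_d(q)$ I would treat by passing to the power $g^e\in\PGL_d(q)$, where $e$ is the order of the field-automorphism part, and by bounding the element order: a genuine semilinear element has order substantially below $n$ except when $d$ and $q$ are small, so $|g|\ge n/4$ again confines such $g$ to small cases, which reappear among Lines~$9$--$23$.

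For a multiplicity-free semisimple $s\in\PGL_d(q)$ I would count cycles through the decomposition $V=V_1\oplus\cdots\oplus V_t$ into pairwise non-isomorphic irreducibles of dimensions $d_1,\ldots,d_t$. Each $\mathbb{P}(V_i)$ is $\langle s\rangle$-invariant and carries at least one cycle, and the points meeting two or more of the $V_i$ contribute still further cycles, so a counting argument shows that $t\ge 3$ already produces at least five cycles once $d$ or $q$ exceeds a small bound. This leaves $t\le 2$. When $t=1$, $V$ is irreducible and $s$ is a power of a Singer cycle of order $q^d-1$; as the projective Singer torus modulo scalars is cyclic of order $n$ and acts freely, $s$ has exactly $[C_n:\langle s\rangle]$ cycles, each of length $n/[C_n:\langle s\rangle]$, and this index is $1,2,3$ or $4$ precisely under the congruence conditions on $q$ and $d$ recorded in Lines~$1$--$6$. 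When $t=2$ we have $d=d_1+d_2$; here $\gcd(d_1,d_2)=1$ is forced, since otherwise $\gcd(q^{d_1}-1,q^{d_2}-1)=q^{\gcd(d_1,d_2)}-1$ is too large and the mixed points break into more than four cycles. The two ``pure'' cycles have lengths $(q^{d_1}-1)/(q-1)$ and $(q^{d_2}-1)/(q-1)$, while the length of a mixed-point cycle is governed by the least $k$ with $\zeta_1^k=\zeta_2^k\in\mathbb{F}_q^*$, where $\zeta_i$ generates $\mathbb{F}_{q^{d_i}}^*$. This is exactly the arithmetic controlled by Lemma~\ref{numbertheory}: the integer $c$ there calibrates the relative orders of the two Singer factors so that the mixed points form a single cycle, splitting into two equal cycles precisely when $q,d_1,d_2$ are all odd, which yields Lines~$7$ and~$8$.

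The main obstacle will be the cycle count for the reducible semisimple elements: pinning down the exact mixed-cycle lengths in Lines~$7$ and~$8$, and proving the sharp dichotomy that either $t\le 2$ with $\gcd(d_1,d_2)=1$ (the infinite families) or $d$ and $q$ are small. Both rest on careful bookkeeping of the greatest common divisors relating the quantities $(q^{d_i}-1)/(q-1)$ to the scalar subgroup $\mathbb{F}_q^*$, for which Lemma~\ref{numbertheory} supplies the key input. Finally, the residual pairs $(d,q)$ produced by the reductions above, namely the low-dimensional and small-field cases together with the semilinear ones, would be settled by a combination of direct argument and explicit computation in \texttt{magma}, producing Lines~$9$--$23$ of Table~\ref{table2}.
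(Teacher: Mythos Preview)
Your overall architecture matches the paper's: split into $g\in\PGL_d(q)$ versus $g\notin\PGL_d(q)$, decompose the semisimple part via Notation~\ref{not1}, show $t\le 2$, handle $t=1$ as a Singer-cycle power giving Lines~$1$--$6$, handle $t=2$ with Lemma~\ref{numbertheory} giving Lines~$7$--$8$, and mop up the non-semisimple and semilinear residue by order bounds and computation. So the strategy is right.

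There is, however, a genuine gap in your semilinear reduction. You write that you would ``pass to the power $g^e\in\PGL_d(q)$'' and use that ``a genuine semilinear element has order substantially below $n$''. But merely knowing $g^e\in\PGL_d(q)$ gives only $|g|=e\,|g^e|\le e\cdot(q^d-1)/(q-1)$, which does not contradict $|g|\ge n/4$ at all. The paper's proof supplies the missing idea: by Lang's theorem, if $g=x\psi$ with $\psi$ a field automorphism of order $e$, one can write $x=aa^{-\psi}$ for some $a$ in the ambient algebraic group, and then $a^{-1}(x\psi)^e a$ is $\psi$-fixed, hence lies in $\PGL_d(q^{1/e})$. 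This yields the much sharper bound
\[
|g|=e\,|g^e|\le e\cdot\frac{q^{d/e}-1}{q^{1/e}-1},
\]
and comparing this with $(q^d-1)/(4(q-1))$ is what forces $(p,f,e,d)$ into a finite explicit list. Without Lang's theorem (or an equivalent bound imported from~\cite{GMPSorders}, which you do not actually cite for this purpose), the semilinear case does not close.

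Two smaller remarks. First, for $t\ge 3$ the paper's argument is uniform and needs no ``once $d$ or $q$ exceeds a small bound'' caveat: one simply exhibits five distinct $\langle g\rangle$-invariant nonempty sets, namely $P(V_1)$, $P(V_2)$, $P(V_3)$, $P(V_1\oplus V_2)\setminus(P(V_1)\cup P(V_2))$ and $P(V_1\oplus V_3)\setminus(P(V_1)\cup P(V_3))$. Second, in the $t=2$ analysis you assert that the two pure cycles have lengths $(q^{d_i}-1)/(q-1)$; the paper first proves that each $g_i$ must be transitive on $P(V_i)$ (else the mixed orbits already give five cycles), so a line justifying this is needed before you can invoke the Singer description.
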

\begin{proof}
Let $V=\mathbb{F}_q^d$ be the $d$-dimensional vector space of row vectors over the finite field $\mathbb{F}_q$ of size $q$ and let $e_1,\ldots,e_d$ be the canonical basis of $V$. Write $q=p^f$ for some prime $p$ and some integer $f\geq 1$. For a subspace $W$ of $V$, we denote by $P(W)$ the set of $1$-dimensional subspaces of $V$ contained in $W$. Moreover we use Notation~\ref{not1}.

We split the proof into two cases depending on whether $g\in \PGL_d(q)$, or $g\in \PGammaL_d(q)\setminus\PGL_d(q)$.

Suppose that $g\in \PGL_d(q)$ has at most four cycles and observe that $|g|\geq (q^d-1)/(4(q-1))$. Moreover, we start by dealing with the case that $g$ is a semisimple element. By Maschke's theorem, as in Notation~\ref{not1}, $V=V_1\oplus\cdots\oplus V_t$, where each $V_i$ is an irreducible $\mathbb{F}_q\langle g\rangle$-module. If $t\geq 3$, then $g$ has at least $5$ orbits on $P(V)$. Indeed, $g$ has at least one orbit on each of $P(V_1)$, $P(V_2)$, $P(V_3)$, $P(V_1\oplus V_2)\setminus (P(V_1)\cup P(V_2))$ and $P(V_1\oplus V_3)\setminus (P(V_1)\cup P(V_3))$. Thus $t\leq 2$. 

If $t=1$, then $g$ acts irreducibly on $V$ and so by Schur's lemma $g$ lies in a Singer cycle of $\PGL_d(q)$ of order $(q^d-1)/(q-1)$. As a Singer cycle acts regularly on $P(V)$, we have $|g|=(q^d-1)/(i(q-1))$ for $i=1,2,3,4$ such that $i\mid (q^d-1)/(q-1)$, and $g$ has exactly $i$ cycles in its action on $P(V)$. Note that $2$ divides $(q^d-1)/(q-1)$ if and only if $q$ is odd and $d$ is even. Similarly, $3$ divides $(q^d-1)/(q-1)$ if and only if $3\mid d$ and $q\equiv 1\pmod 3$, or $2\mid d$ and $q\equiv 2\pmod 3$. Finally, $4$ divides $(q^d-1)/(q-1)$ only if $4\mid d$ and $q\equiv 1\pmod 4$, or $2\mid d$ and $q\equiv 3\pmod 4$. This gives the first six lines of Table~\ref{table2}. 

Now suppose that $t=2$. Write $d_i=\dim_{\mathbb{F}_q} V_i$ for $i=1,2$. Now a lift of $g$ in $\GL_d(q)$ can be written as $g_1g_2=g_2g_1$, with $g_i\in \GL_{d_i}(q)$ fixing pointwise $P(V_j)$, for $i \ne j$. 
We claim that $g_i$ has only one orbit on $P(V_i)$ for $i=1,2$. For otherwise, if $g_1$ had two orbits on $P(V_1)$ say, then $g$ would have two orbits on $P(V_1)$ and at least two orbits on $P(V)\setminus (P(V_1)\cup P(V_2))$. Since $g$ also has at least one orbit on $P(V_2)$, $g$ would have at least five orbits on $P(V)$. Therefore we argue as in the previous paragraph that, since $g_i$ has only one cycle on $P(V_i)$, it is a Singer cycle in its action on $P(V_i)$ and so has order $a_i=(q^{d_i}-1)/(q-1)$ on $V_i$.  In particular, we have $g_i=h_i^{j_i}$ for some Singer cycle $h_i\in\GL_{d_i}(q)$ and some divisor $j_i$ of $q-1$ such that $\gcd(j_i,a_i)=1$. 

As $g$ has at most two cycles on $P(V)\setminus (P(V_1)\cup P(V_2))$, for any vector $w_2 \in W_2\setminus \{0\}$, at least half of the vectors of the form $w_1\oplus w_2$ with $w_1 \in W_1\setminus \{0\}$ must appear in each of these cycles.  A vector of the form $w_1'\oplus w_2$ with $w_1' \in W_1\setminus \{0\}$ can only occur as $g^{a_2j}(w_1\oplus w_2)$ for some $j$, since $g_2$ acts regularly on $W_2 \setminus\{0\}$ with order $a_2$.  Thus the element $g^{a_2}=g_1^{a_2}$ has at most two cycles on $P(V_1)$ and hence $\gcd(a_1,a_2)\leq 2$. Moreover, since $\gcd(a_1,a_2)=(q^{\gcd(d_1,d_2)}-1)/(q-1)$ (which cannot be equal to $2$), we have $\gcd(d_1,d_2)=1$.

Since $g$ has at most four cycles, we see that $g$ has at most two cycles on $P(V)\setminus (P(V_1)\cup P(V_2))$, which is a set of
\[\frac{q^d-1}{q-1}-\frac{q^{d_1}-1}{q-1}-\frac{q^{d_2}-1}{q-1} = \frac{(q^{d_1}-1)(q^{d_2}-1)}{q-1}=a_1a_2(q-1)\]
 $1$-subspaces. 

We show that either
\begin{itemize}
\item $g$ has exactly one cycle on this set and $qd_1d_2$ is even (giving rise to line 7 of Table~\ref{table2}), or
\item $g$ has exactly two cycles of equal length (namely $a_1a_2(q-1)/2$) on this set and $qd_1d_2$ is odd (giving rise to line 8 of Table~\ref{table2}).
\end{itemize}
To prove this claim, arbitrarily choose $w_1, w_1'\in V_1\setminus\{0\}$ and $w_2, w_2'\in V_2\setminus\{0\}$.  Since $g_i$ is a Singer cycle on $V_i$ and fixes $V_{3-i}$ pointwise, there exist $j_1,j_2$ such that $(w_1\oplus w_2)g_1^{j_1}g_2^{j_2}=w_1'\oplus w_2'$.  Since $g_1^{j_1}g_2^{j_2}$ commutes with $g$, it is straightforward to see that the orbits of $w_1\oplus w_2$ and of $w_1'\oplus w_2'$ under $g$ must have the same length. In particular, if $g$ has two cycles on this set, then $a_1a_2(q-1)$ is even, and hence $q$ is odd. Moreover, 
\[
|g|=\left\{
\begin{array}{lcl}
a_1a_2(q-1)&&\textrm{if }g \textrm{ has three cycles on }P(V),\\
a_1a_2(q-1)/2&&\textrm{if }g \textrm{ has four cycles on }P(V).\\
\end{array}
\right.
\]

Since $h_i$ is a Singer cycle in $\GL_{d_i}(q)$, we see that $h_i^{a_i}$ corresponds to multiplication by some scalar $\lambda_i\in \mathbb F_q$, with $|\lambda_i|=q-1$.  If $\lambda_1=\lambda$, then $\lambda_2=\lambda^c$ for some $c$ such that $\gcd(c,q-1)=1$.  Now,  for any nonzero $v_1\in V_1$ and any nonzero $v_2 \in V_2$, we have
\begin{equation}\label{dumb}
g^{a_1a_2}: \langle v_1 \oplus v_2\rangle \to \langle \lambda^{a_2j_1}v_1\oplus  \lambda^{ca_1j_2}v_2\rangle=\langle v_1\oplus \lambda^{ca_1j_2-a_2j_1}v_2\rangle.
\end{equation}
This shows that $|g^{a_1a_2}|=|\lambda^{ca_1j_2-a_2j_1}|$. Therefore $\gcd(ca_1j_2-a_2j_1,q-1)=1$ when $g$ has three cycles and $\gcd(ca_1j_2-a_2j_1,q-1)=2$ when $g$ has four cycles. Suppose first that $g$ has three cycles. Then either $q$ is even (and hence $qd_1d_2$ is even), or $q$ is odd and $ca_1j_2-a_2j_1$ is odd. In the latter case, since $\gcd(j_1,q-1)=\gcd(j_2,q-1)=\gcd(c,q-1)=1$, we find that $c,j_1,j_2$ are odd and thus $a_1$ and $a_2$ have opposite parity.  Notice that the parity of $a_i$ is the same as the parity of $d_i$ since $q$ is odd and $a_i=q^{d_i-1}+\cdots+1$ (which has $d_i$  summands). This shows that $qd_1d_2$ is even in both cases.

Suppose that $g$ has four cycles.  Then $\gcd(ca_1j_2-a_2j_2,q-1)=2$ and hence $q$ is odd. Moreover, since $\gcd(j_1,q-1)=\gcd(j_2,q-1)=\gcd(c,q-1)=1$, we find that $c,j_1,j_2$ are odd and thus $a_1$ and $a_2$ have the same parity. Therefore $d_1$ and $d_2$ have also the same parity. Since $\gcd(d_1,d_2)=1$, this shows that $qd_1d_2$ is odd. Our claim is now proved.

We now show that when $qd_1d_2$ is odd, there do exist permutations with exactly four cycles on $P(V)$. Replacing $h_2$ by a suitable power, we may assume that $h_2^{a_2}=\lambda=h_1^{a_1}$. From Lemma~\ref{numbertheory}, there exists $c\in \mathbb{Z}$ with $\gcd(c,q-1)=1$ and $\gcd (cd_1-d_2,q-1)=2$. Take $g=h_1h_2^c$. The computation in~\eqref{dumb} shows that $g$ has one orbit on each of $P(V_1)$ and  $P(V_2)$, and two orbits on $P(V)\setminus(P(V_1)\cup P(V_2))$.

Similarly when $qd_1d_2$ is even, there do exist permutations with exactly three cycles on $P(V)$. Replacing $h_2$ by a suitable power, we may assume that $h_2^{a_2}=\lambda=h_1^{a_1}$. From Lemma~\ref{numbertheory}, there exists $c\in \mathbb{Z}$ with $\gcd(c,q-1)=1$ and $\gcd (cd_1-d_2,q-1)=1$. Take $g=h_1h_2^c$. The computation in~\eqref{dumb} shows that $g$ has one orbit on each of $P(V_1)$ and  $P(V_2)$, and one orbit on $P(V)\setminus(P(V_1)\cup P(V_2))$.

Next assume that $g$ is not semisimple. Write $g=g_{ss}g_u=g_ug_{ss}$, with $g_{ss}$ semisimple and $g_u$ unipotent. We use Notation~\ref{not1} for the element $g_{ss}$ and for the partition $p(g_{ss})$ associated to $g_{ss}$. 

Assume that $t=1$ so that 
\[p(g_{ss})=(\underbrace{d_1,\ldots,d_1}_{m_1\,\textrm{times}})\]
 for some $d_1,m_1\geq 1$; we note that $d=d_1m_1$. Observe that since $g$ is not semisimple $m_1\geq 2$. Now, from Notation~\ref{not1}, $|g| \le p^{\lceil\log_p(m_1)\rceil}(q^{d_1}-1)/(q-1)$. A computation shows that if $d \ge 3$, then this upper bound on $|g|$ is greater than or equal to $(q^d-1)/(4(q-1))$ only if $(q,d_1,m_1,d)=(2,1,3,3)$, $(2,1,4,4)$, $(2,1,5,5)$, $(2,2,2,4)$, $(3,2,2,4)$. Analysing individually each of these examples, we find that only $\PGL_3(2)$, $\PGL_4(2)$ and $\PGL_4(3)$ contain an element with at most four cycles that is not semisimple (and the cycle structures are $(1,2,4)$, $(3,6,6)$ and $(4,12,12,12)$ respectively). Thus we obtain lines 18, 21 and 22 of Table~\ref{table2}. Now assume that $d=2$. In this case, as $g$ is not semisimple, we must have $g_{ss}=1$ and $g=g_u$, that is, $g$ is unipotent. It is clear that $g$ has at least $1+q/p$ cycles on $P(V)$ and hence either $q$ is prime, or $q\in \{4,9\}$ (and since $p \mid |g|$, the element $g$ has cycle structure $(1,q)$, $(1,2,2)$ or $(1,3,3,3)$ respectively). Thus we obtain lines 9, 10 and 12 of Table~\ref{table2}.

Next assume that $t\geq 2$. Moreover, assume also that $m_1,m_2\geq 2$. Again from Notation~\ref{not1} and from the fact that the maximal element order of $\GL_{d-d_1m_1-d_2m_2}(q)$ is $q^{d-d_1m_1-d_2m_2}-1$ (see for example~\cite[Corollary~$2.7$]{GMPSorders}), we have \[|g|< (q^{d-d_1m_1-d_2m_2}-1)\lcm(q^{d_1}-1,q^{d_2}-1)\max\{p^{\lceil \log_p(m_1)\rceil},p^{\lceil \log_p(m_2)\rceil}\}.\]
Another computation shows that this upper bound on $|g|$ implies that $|g|<(q^d-1)/(4(q-1))$, and hence $g$ has more than four cycles. 

It remains to consider the case 
\[p(g_{ss})=(\underbrace{d_1,\ldots,d_1}_{m_1\,\textrm{times}},d_2,d_3,\ldots,d_t)\]
with $m_1\geq 1$, $m_2=\cdots =m_t=1$ and $t\geq 2$. Observe that since $g$ is not semisimple, we have $m_1\geq 2$. Now 
\begin{equation}\label{EQ}
 |g|< (q^{d-d_1m_1-d_2}-1)\lcm(q^{d_1}-1,q^{d_2}-1)p^{\lceil \log_p(m_1)\rceil}
\end{equation} 
 and hence another computation shows that $|g| \geq (q^d-1)/(4(q-1))$ is possible only if $d_1\leq 2$ and $m_1\leq 3$. And further, if $m_1=3$, then $(q,d_1)=(2,1)$. First suppose that $m_1=3$. Then since $(q,d_1)=(2,1)$, the element $g$ in its Jordan decomposition has a $3\times 3$ unipotent block 
\[\left(
\begin{array}{ccc}
1&1&0\\
0&1&1\\
0&0&1\\
\end{array}
\right),
\]
which has three orbits on $\langle e_1,e_2,e_3\rangle$.
As $t\geq 2$, we have $d\geq m_1+1 \ge 4$ and it is now clear that $g$ has more than four orbits on $P(V)$. Thus we may assume that $m_1=2$. In this case, the Jordan form of $g$ is
\[
\left(
\begin{array}{ccc}
A&I&0\\
0&A&0\\
0&0&B
\end{array}
\right)
\]
where $A\in \GL_{d_1}(q)$, $B \in \GL_{d-2d_1}(q)$ is semisimple and $I \in \GL_{d_1}(q)$ is the identity matrix. It is clear that $e_1$, $e_{d_1+1}$, $e_{2d_1+1}$, $e_{d_1+1}+e_{2d_1+1}$, and $e_1+e_{2d_1+1}$ are all in different cycles of $g$, since $g$ fixes setwise the subspaces $\langle e_{d_1+1}, \ldots, e_{2d_1}\rangle$, $\langle e_{1}, \ldots e_{2d_1}\rangle$, $\langle e_{2d_1},\ldots,e_d\rangle$.  (Note that we apply matrices from the right.)  Thus $g$ has at least 5 cycles, a contradiction.
This concludes the case $g\in \PGL_d(q)$.

\smallskip

Finally suppose that $g\in \PGammaL_d(q)\setminus\PGL_d(q)$. In particular, $f\geq 2$ and $g=x\psi$ for some $x\in \PGL_d(q)$ and for some non-trivial field automorphism $\psi$ of order $e>1$. Let $\overline{\mathbb{F}}_q$ be the algebraic closure of the finite field $\mathbb{F}_p$. From Lang's theorem~\cite[Theorem~2.1]{GLS}, there exists $a$ in the algebraic group $\PGL_d(\overline{\mathbb{F}}_q)$ with $x=aa^{-\psi}$. Observe that $(x\psi)^e=xx^\psi\cdots x^{\psi^{e-2}}x^{\psi^{e-1}}$. Write $z=a^{-1}(x\psi)^ea$. Now,
\begin{eqnarray*}
z^\psi&=&a^{-\psi}(x^\psi x^{\psi^2}\cdots x^{\psi^{e-1}}x^{\psi^{e}})a^\psi=
a^{-\psi}(x^\psi x^{\psi^2}\cdots x^{\psi^{e-1}}x)a^\psi\\
&=&(a^{-\psi}x^{-1})(xx^\psi\cdots x^{\psi^{e-2}}x^{\psi^{e-1}})(xa^\psi)=a^{-1}(xx^\psi\cdots x^{\psi^{e-2}}x^{\psi^{e-1}})a\\
&=&a^{-1}(x\psi)^ea=z
\end{eqnarray*}
and so $z$ is invariant under the field automorphism $\psi$ of order $e$. Thus $z\in \PGL_d(q^{1/e})$. From~\cite[Corollary~$2.7$]{GMPSorders} for example, we know that the maximal element order of $\PGL_d(q^{1/e})$ is $(q^{d/e}-1)/(q^{1/e}-1)$. 
Observe further that since $g$ has at most four cycles, we have $|g|\geq \lceil(q^d-1)/(4(q-1))\rceil$. Therefore
\[\left\lceil\frac{q^d-1}{4(q-1)}\right\rceil\leq |g|= e|g^e|=e|z|\leq e \left( \frac{q^{d/e}-1}{q^{1/e}-1}\right).\]
It follows by a direct computation that this inequality is satisfied only for
\begin{eqnarray*}
(p,f,e,d)&\in &\{ (2, 2, 2, 2), ( 3, 2, 2, 2), (5, 2, 2, 2),
(7, 2, 2, 2),
(2, 3, 3, 2),
(3, 3, 3, 2),\\
&&(2, 4, 2, 2),
(2, 4, 4, 2),
(2, 5, 5, 2),
(2, 6, 2, 2),
(2, 6, 6, 2),
(2, 2, 2, 3),\\
&&(3, 2, 2, 3),
(2, 3, 3, 3),
(2, 2, 2, 4)\}.
\end{eqnarray*}
For each of these quadruples, we can check with a computer whether $\PGammaL_d(q)$ contains a permutation with at most four cycles and we obtain only the cases listed in Table~\ref{table2}.
\end{proof}

\section{Primitive groups: almost simple and product action type}\label{notation}
In this section we assume that $G=\Sym(m)\wr \Sym(\ell)$ in its product action (of degree $n={m\choose k}^\ell$) on a Cartesian product of $\ell$  copies of the set of $k$-subsets of $\{1,\ldots,m\}$ (where $1 \le k <m/2$), or that $G=\PGammaL_d(q)\wr\Sym(\ell)$ in its product action (of degree $n=((q^d-1)/(q-1))^\ell$) on a Cartesian product of $\ell$  copies of the set of  points of the projective space $\PGdq$. We do not require that $\ell\geq 2$ and so, in particular, we deal simultaneously with primitive groups of almost simple and product action type. Write $\Delta$ for the family of $k$-subsets of $\{1,\ldots,m\}$ or for the point set of $\PGdq$ (depending on which group we are considering) and $\Omega=\Delta^\ell$.

We let $\soc(G)$ denote the socle of $G$; here we have $\soc(G)=\Alt(m)^\ell$ or $\soc(G)=\PSL_d(q)^\ell$. Moreover, in what follows we write $H=\Sym(m)$ or $H=\PGammaL_d(q)$ (again depending on which group we are considering) and we write the elements $g \in G$ as $(h_1,\ldots,h_\ell)\sigma$, with $h_1,\ldots,h_\ell\in H$ and $\sigma\in\Sym(\ell)$. We reserve the letter $T$ to denote the socle of $H$ and $r=|\Delta|$ to denote the degree of the permutation group $H$ (so, $r={m\choose k}$ or $r=(q^d-1)/(q-1)$). Since we are only interested in primitive groups of almost simple and product action type, we assume that $m\geq 5$, $d\geq 2$, and $q\geq 4$.  Since $r \neq 1$, this forces $r \ge 5$.

We start with a basic lemma that will be used repeatedly in the sequel.
\begin{lemma}\label{tech}
Let $\Delta_i$ be a finite set and let $h_i\in \Sym(\Delta_i)$ be a cycle of length $|\Delta_i|$, for $i=1,2$. Then $(h_1,h_2)$ has $\gcd(|\Delta_1|,|\Delta_2|)$ cycles in its action on $\Delta_1\times \Delta_2$ and moreover each cycle has the same length.
\end{lemma}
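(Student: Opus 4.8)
The plan is to prove Lemma~\ref{tech} by the standard orbit-counting argument for the product of two full cycles, exploiting the cyclic-group structure. First I would identify $\Delta_1$ with the cyclic group $\mathbb{Z}/a\mathbb{Z}$ (where $a=|\Delta_1|$) and $\Delta_2$ with $\mathbb{Z}/b\mathbb{Z}$ (where $b=|\Delta_2|$), chosen so that $h_1$ acts as $x\mapsto x+1$ on $\Delta_1$ and $h_2$ acts as $y\mapsto y+1$ on $\Delta_2$. This is legitimate precisely because each $h_i$ is a single cycle of full length, so $\langle h_i\rangle$ acts regularly on $\Delta_i$. Under this identification the element $(h_1,h_2)$ acts on $\Delta_1\times\Delta_2\cong\mathbb{Z}/a\mathbb{Z}\times\mathbb{Z}/b\mathbb{Z}$ as translation by $(1,1)$.

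The key computation is then to determine the order of the element $(1,1)$ in the abelian group $\mathbb{Z}/a\mathbb{Z}\times\mathbb{Z}/b\mathbb{Z}$. The order is the least positive integer $\lambda$ with $\lambda\equiv 0\pmod a$ and $\lambda\equiv 0\pmod b$, namely $\lambda=\lcm(a,b)$. Since $\langle(1,1)\rangle$ acts by translation it acts \emph{semiregularly} on $\mathbb{Z}/a\mathbb{Z}\times\mathbb{Z}/b\mathbb{Z}$, so every cycle of $(h_1,h_2)$ has the same length $\lambda=\lcm(a,b)$. Consequently the number of cycles is the total number of points divided by the common cycle length, that is $ab/\lcm(a,b)=\gcd(a,b)$, using the identity $\lcm(a,b)\gcd(a,b)=ab$. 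This simultaneously yields both assertions of the lemma: there are $\gcd(|\Delta_1|,|\Delta_2|)$ cycles and each has length $\lcm(|\Delta_1|,|\Delta_2|)$.

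There is no real obstacle here; the only point requiring a word of care is the reduction to the translation action, which must invoke that a full-length cycle gives a \emph{regular} cyclic action so that the coordinate identifications with $\mathbb{Z}/a\mathbb{Z}$ and $\mathbb{Z}/b\mathbb{Z}$ are available. Everything else is the elementary fact that a cyclic group generated by $(1,1)$ in a direct product of two cyclic groups acts with all orbits of equal size equal to the order of the generator. I would present the argument concisely, stating the regular-action identification, computing the order of $(1,1)$ as $\lcm(a,b)$, invoking semiregularity for the equal-length claim, and dividing to count the cycles.
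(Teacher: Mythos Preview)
Your proof is correct and complete. The paper itself does not give an argument for this lemma; it simply cites \cite[Lemma~3, p.~92]{Neu} and \cite[3.2.19]{DM}, so your self-contained derivation via the regular translation action on $\mathbb{Z}/a\mathbb{Z}\times\mathbb{Z}/b\mathbb{Z}$ in fact provides more detail than the paper does.
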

\begin{proof}
See~\cite[Lemma~$3$, page~$92$]{Neu} or~\cite[$3.2.19$]{DM}.
\end{proof}

The following two results are the main tools used in this section.
\begin{proposition}\label{prop1:PA}Assume that $G$ contains a permutation $g=y\sigma$ having at most four cycles, with $y\in H^\ell$, $\sigma\in\Sym(\ell)$ and $\sigma\neq 1$. Then $\sigma$ is a transposition, and either $r\in \{5,6,7,8\}$ and $\ell=2$, or $r\in\{5,8\}$ and $\ell=3$.
\end{proposition}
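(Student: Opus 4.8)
The plan is to read off the cycles of $g=y\sigma$ from the $\sigma$-orbits on the coordinate set $\{1,\ldots,\ell\}$. Writing these orbits as $C_1,\ldots,C_p$ (counting the fixed points of $\sigma$ as singletons), the product set factorises as $\Omega=\Delta^{\ell}=\prod_{j}\Delta^{C_j}$, and $g$ respects this decomposition, acting on $\Delta^{C_j}$ as an element $g_j$ of the shape $(h_i)_{i\in C_j}\rho_j$ with $\rho_j$ a single $|C_j|$-cycle. Thus $g=g_1\times\cdots\times g_p$ is a direct product of permutations, and if $g_j$ has cycle lengths $t_{j,1},t_{j,2},\ldots$ on $\Delta^{C_j}$ then, extending Lemma~\ref{tech} from two full cycles to a direct product of arbitrary permutations,
\[
c(g)=\sum_{(a_1,\ldots,a_p)}\frac{t_{1,a_1}\cdots t_{p,a_p}}{\lcm(t_{1,a_1},\ldots,t_{p,a_p})},
\]
where $c(\cdot)$ denotes the number of cycles. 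Since every summand is a positive integer, this yields at once the crude but decisive bound $c(g)\ge\prod_{j}c(g_j)$.

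The core of the argument is to bound $c(g_j)$ from below for a nontrivial $\sigma$-orbit $C_j$ of length $s\ge 2$. Here I would pass to $g_j^{s}$, which acts coordinatewise as $(P_1,\ldots,P_s)$ with each $P_i$ conjugate to the cycle product $P$ of the $h_i$ ($i\in C_j$); if $P$ has $c$ cycles of lengths $t_1,\ldots,t_c$ on $\Delta$ (so $t_1+\cdots+t_c=r$) then the same product formula gives $c(g_j^{s})\ge\max\{c^{s},\ t_1^{s-1}+\cdots+t_c^{s-1}\}$, the first bound because every summand is at least $1$ and the second from the diagonal terms $a_1=\cdots=a_s$. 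Combining with $c(g_j)\ge c(g_j^{s})/s$ (valid since $s\mid\lvert g_j\rvert$, so $[\langle g_j\rangle:\langle g_j^{s}\rangle]=s$): for $s=2$ the diagonal estimate alone gives $c(g_j^{2})\ge\sum_a t_a=r$, hence $c(g_j)\ge\lceil r/2\rceil$; and for $s\ge3$, distinguishing the cases $c=1$, $c=2$ and $c\ge3$ and using $r\ge5$, a short computation forces $c(g_j)\ge5$. As $c(g)\le4$, no $\sigma$-orbit can have length $\ge3$. Moreover two disjoint transpositions would give $c(g)\ge\lceil r/2\rceil^{2}\ge 9$, so $\sigma$ is a single transposition, and then $c(g)\ge c(g_1)\ge\lceil r/2\rceil$ forces $r\le 8$, i.e.\ $r\in\{5,6,7,8\}$.

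It remains to control $\ell$. Each $\sigma$-fixed coordinate $i$ contributes a factor $c(h_i)\ge1$; if any such $h_i$ had more than one cycle on $\Delta$, then $c(g)\ge2\lceil r/2\rceil\ge6$, so every fixed coordinate must carry a single $r$-cycle. Feeding this back into the product formula (for $\ell\ge3$), the fixed coordinates each contribute one cycle of length $r$, and one computes
\[
c(g)=r^{\ell-3}\sum_{b}\gcd(u_b,r),
\]
where the $u_b$ are the cycle lengths of the transposition factor $g_1$ on $\Delta^2$. Since $\sum_b\gcd(u_b,r)\ge c(g_1)\ge\lceil r/2\rceil\ge3$, having $\ell\ge4$ would force $c(g)\ge 3r\ge15$; hence $\ell\le3$. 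For $\ell=2$ one simply has $c(g)=c(g_1)\ge\lceil r/2\rceil$, giving $r\in\{5,6,7,8\}$, whereas for $\ell=3$ the formula reads $c(g)=\sum_b\gcd(u_b,r)$, which must be at most $4$.

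I expect the main obstacle to be the final step for $\ell=3$: deciding for which $r\in\{5,6,7,8\}$ the quantity $\sum_b\gcd(u_b,r)$ can be made $\le4$. This is delicate because it depends on the precise cycle lengths $u_b$ realisable by a twisted factor $(h_1,h_2)\rho$ on $\Delta^2$, hence on the cycle product $h_1h_2$, and on whether $H$ even contains a single $r$-cycle. I would settle these finitely many cases by combining the divisibility constraints implicit in the displayed formula (for instance, when $r$ is prime each $\gcd(u_b,r)$ equals $1$ or $r$, so no $u_b$ may be divisible by $r$) with a direct check, which should leave precisely $r\in\{5,8\}$.
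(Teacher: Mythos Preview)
Your argument is correct, and while it shares the same underlying idea as the paper's proof (bounding the cycle count of $g$ via the $\sigma$-orbit structure), the execution is noticeably cleaner. The paper fixes the longest $\sigma$-cycle of length $t$, passes to $x=g^t$, and then counts cycles by fixing a reference point in each cycle-length class $X_i$ of the product permutation $h$, obtaining $c_x\ge\sum_i(d_ii)^{t-1}$; it then quotes \cite[Table~6]{GMPSorders} to bound $\ell$ and finishes the remaining $(r,\ell)$ pairs by a \texttt{magma} calculation. Your decomposition $g=g_1\times\cdots\times g_p$ along $\sigma$-orbits, combined with the explicit product formula for cycle counts, does two things more economically: it gives the transposition conclusion and $r\le 8$ by the same diagonal estimate the paper uses, and---more to the point---it eliminates the dependence on \cite{GMPSorders} for the bound on $\ell$, since your identity $c(g)=r^{\ell-3}\sum_b\gcd(u_b,r)$ forces $\ell\le 3$ directly. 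The only place where you and the paper converge again is the final sieve for $\ell=3$: both of you leave the determination of which $r\in\{5,6,7,8\}$ admit $\sum_b\gcd(u_b,r)\le 4$ to a finite check, which the paper performs by computer. Your divisibility remark (for $r$ prime no $u_b$ may be divisible by $r$) is a reasonable start on doing this by hand, but as stated your proof, like the paper's, is not complete without carrying out that check.
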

\begin{proof}
We have $g=(h_1,\ldots,h_\ell)\sigma$ with $h_1,\ldots,h_\ell\in H$. Let $t$ be the length of the longest cycle of $\sigma$ and let $c_g$ be the number of cycles of $g$. Relabelling the index set $\{1,\ldots,\ell\}$ if necessary, we may assume that $(1,\ldots,t)$ is a cycle of $\sigma$. Consider $x=g^t$ and let $c_x$ be the number of cycles of $x$. Clearly, we have $c_g\geq \lceil c_x/t\rceil$ and so
\begin{equation}\label{eqeqeq1}\lceil c_x/t\rceil \leq 4.
\end{equation} Now we estimate $c_x$. We have
\[x=(h_1\cdots h_t,h_2\cdots h_th_1,\ldots, h_th_1\cdots h_{t-1},h_{t+1}',\ldots,h_\ell')\tau\]
with $\tau=\sigma^t$ and with $h_{t+1}',\ldots,h_\ell'\in H$. We note that the elements \[h_1h_2\cdots h_t,\, h_2\cdots h_th_1,\,\ldots,\, h_th_1\cdots h_{t-1}\] are pairwise conjugate permutations of $H$. In particular, replacing $x$ by a suitable conjugate in $G$, we may assume that
\[x=(\underbrace{h,\ldots,h}_{t\,\textrm{times}},h_{t+1}',\ldots,h_{\ell}')\tau\]
with $h\in H$.
Note that $\tau$ fixes point-wise the indices $\{1,\ldots,t\}$.
Denote by $d_i$ the number of cycles of $h$ of length $i$, for $i\in \{1,\ldots,r\}$. Let $(\alpha_{1,1},\alpha_{2,1},\ldots,\alpha_{i,1}),\ldots, (\alpha_{1,d_i},\alpha_{2,d_i},\ldots,\alpha_{i,d_i})$ be the $d_i$ cycles of length $i$ of $h$. Write \[X_i=\bigcup_{j=1}^{d_i}\{\alpha_{1,j},\alpha_{2,j},\ldots,\alpha_{i,j}\}\] and fix an element $\overline{\alpha}$ of $X_i$. It is easy to check that for each choice $\beta_1,\ldots,\beta_{t-1}\in X_i$, the point \[(\overline{\alpha},\beta_1,\ldots,\beta_{t-1},\underbrace{\overline{\alpha},\ldots,\overline{\alpha}}_{(\ell-t)\,\textrm{times}})\] 
is in a distinct cycle of $x$. Since $|X_i|=d_ii$, we have
\begin{equation}\label{eqeqeq2}
c_x\geq \sum_{i=1}^r(d_ii)^{t-1}.
\end{equation}
Recall that $r=\sum_{i}d_ii$. For $t=2$,~\eqref{eqeqeq1} and~\eqref{eqeqeq2} imply that $r\leq 8$. For $t\geq 6$, we have $a^{t-1}>4t$ (for $a\geq 2$) and so~\eqref{eqeqeq1} and~\eqref{eqeqeq2} are not satisfied. Assume that $t=3$. Now $a^{t-1}>4t$ for $a\geq 4$. In particular,~\eqref{eqeqeq1} and~\eqref{eqeqeq2} are satisfied only if $d_1\leq 3$, $d_2\leq 1$, $d_3\leq 1$ and $d_i=0$ for $i\geq 4$. Thus $r\leq 8$. Now, with a direct inspection for every possible value of $r\in \{5,\ldots,8\}$, we see that~\eqref{eqeqeq1} and~\eqref{eqeqeq2} are never satisfied. A similar detailed analysis for $t=4$ and $t=5$ yields no solutions to~\eqref{eqeqeq1} and~\eqref{eqeqeq2}.

Summing up, we have $t=2$ (so that $\sigma$ is an involution) and $r\leq 8$. If $\sigma$ has at least two cycles of length 2, then the same argument as above produces at least $(\sum d_ii )(\sum d_i' i)=r^2$ distinct cycles of $x$.  This is a contradiction since $c_x \le 2c_g \le 8$ and $r^2 \ge 25$.  Thus $\sigma$ is a transposition.

Since $r$ is bounded by $8$, we see from~\cite[Theorem~$1.3$]{GMPSorders} that $\ell$ is also bounded above by a constant. In fact, using~\cite[Table~$6$]{GMPSorders}, we see that $\ell\leq 3$ if $r=5$ or $6$, and $\ell\leq 4$ if $r=7$ or $r=8$. Now the proof follows by a computer calculation. In particular, we have $\ell\leq 3$ and, for $r\in \{6,7\}$, we have $\ell=2$.
\end{proof}

The following proposition deals with the case where our permutation $g$ with at most four cycles lies in the base group $H^\ell$ of $G$.

\begin{proposition}\label{prop2:PA}
Assume that $H^\ell$ (for $\ell \geq 2$) contains a permutation $g=(h_1,\ldots,h_\ell)$ having at most four cycles. Then, relabelling the index set $\{1,\ldots,\ell\}$ if necessary, one of the following holds:
\begin{enumerate}
\item[(i)] $\ell=2$, $h_1$ and $h_2$ both have two cycles on $\Delta$ of lengths $t_1,r-t_1$ and $t_2,r-t_2$, and each of $t_1,r-t_1$ is relatively prime to each of $t_2,r-t_2$;
\item[(ii)] $\ell=2$, $h_1$ is an $r$-cycle, $h_2$ has two cycles on $\Delta$ of lengths $t_2,r-t_2$; moreover $\gcd(r,t_2)\leq 2$;
\item[(iii)]$\ell=2$, $h_1$ is an $r$-cycle, $h_2$ has three cycles on $\Delta$ of lengths $t_2,t_2',r-(t_2+t_2')$, moreover $\gcd(r,t_2)+\gcd(r,t_2')+\gcd(r,t_2+t_2')\leq 4$;
\item[(iv)]$\ell=3$, $h_1$ is an $r$-cycle, $h_2$ has two cycles on $\Delta$ of lengths $t_2,r-t_2$, and also $h_3$ has two cycles on $\Delta$ of lengths $t_3,r-t_3$; moreover $r,t_2,r-t_2,t_3,r-t_3$ are pairwise relatively prime;
\item[(v)]$\ell=2$, $h_1$ is an $r$-cycle, $h_2$ has four cycles on $\Delta$ of lengths $t_2,t_2',t_2'',r-(t_2+t_2'+t_2'')$; moreover $\gcd(r,t_2)=\gcd(r,t_2')=\gcd(r,t_2'')=\gcd(r,t_2+t_2'+t_2'')=1$.
\end{enumerate}
\end{proposition}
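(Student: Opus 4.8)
The plan is to count the cycles of $g=(h_1,\dots,h_\ell)$ on $\Omega=\Delta^\ell$ in terms of the cycle structures of the individual factors $h_i$ on $\Delta$, and then to extract the possible profiles of these structures from the bound $c_g\le 4$, where $c_g$ denotes the number of cycles of $g$.

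First I would record the basic cycle-counting principle. Write $c_i$ for the number of cycles of $h_i$ on $\Delta$, and decompose each $h_i$ into its cycles. A tuple consisting of one cycle $C_i$ of each $h_i$ determines a $g$-invariant ``block'' $C_1\times\cdots\times C_\ell\subseteq\Omega$, and these $\prod_i c_i$ blocks partition $\Omega$. Applying Lemma~\ref{tech} repeatedly (i.e.\ combining the coordinates two at a time; its conclusion that all resulting cycles have equal length is exactly what lets the induction proceed) shows that on a block with cycle lengths $|C_1|,\dots,|C_\ell|$ the number of $g$-cycles equals $\prod_i|C_i|/\lcm_i|C_i|$, and in particular is at least $1$. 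Summing over blocks gives the crucial lower bound
\[
c_g\ \ge\ \prod_{i=1}^\ell c_i,
\]
so the hypothesis $c_g\le4$ forces $\prod_i c_i\le 4$.

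Next I would pin down how many coordinates can be ``long''. If two coordinates, say $h_1$ and $h_2$, are $r$-cycles (that is, $c_1=c_2=1$), then projecting $\Omega$ onto its first two coordinates is $g$-equivariant and surjective on cycles, so $c_g$ is at least the number of cycles of $(h_1,h_2)$ on $\Delta^2$, which by Lemma~\ref{tech} is $\gcd(r,r)=r\ge5$; this is a contradiction. Hence at most one $h_i$ is an $r$-cycle, i.e.\ at most one $c_i$ equals $1$. Since $\prod_i c_i\le4$ rules out three or more factors exceeding $1$, at most two coordinates have $c_i\ge2$; and since $\ell\ge2$, not every $c_i$ can equal $1$ (otherwise the first two coordinates would both be $r$-cycles), so at least one $c_i\ge2$. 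Combining these: either exactly one coordinate has $c_i\ge2$ --- whence all remaining coordinates are $r$-cycles, of which there is at most one, forcing $\ell=2$ with $(c_1,c_2)=(1,c_2)$ and $c_2\in\{2,3,4\}$ --- or exactly two coordinates have $c_i\ge2$, in which case $\prod_i c_i\le4$ forces both to equal $2$ and all others to be the (at most one) $r$-cycle, so $\ell=2$ with $(c_1,c_2)=(2,2)$ or $\ell=3$ with $(c_1,c_2,c_3)=(1,2,2)$.

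Finally I would translate $c_g\le 4$ into the arithmetic conditions in each of these profiles, which after relabelling are exactly cases (i)--(v). For the $\ell=2$ profiles one applies Lemma~\ref{tech} directly: $c_g=\sum_{a,b}\gcd(|C_{1,a}|,|C_{2,b}|)$, and using $\gcd(r,r-t)=\gcd(r,t)$ one reads off $2\gcd(r,t_2)\le4$ (case (ii)), the three-term sum bounded by $4$ (case (iii)), and the four-term sums that, being sums of four positive integers bounded by $4$, must each equal $1$ (cases (i) and (v)). For the profile $(1,2,2)$ one applies Lemma~\ref{tech} twice per block, so each of the four blocks contributes $\gcd(r,s)\,\gcd(\lcm(r,s),u)\ge1$ cycles (with $s\in\{t_2,r-t_2\}$ and $u\in\{t_3,r-t_3\}$); the bound $c_g\le4$ forces every block to be a single cycle, which is equivalent to $r,t_2,r-t_2,t_3,r-t_3$ being pairwise coprime, namely case (iv). The main obstacle is the bookkeeping in this last step --- correctly obtaining the exact cycle count on each multi-coordinate block from the two-factor Lemma~\ref{tech} and verifying that the resulting gcd conditions are precisely those listed --- together with the $r\ge5$ argument that caps the number of $r$-cycle coordinates at one.
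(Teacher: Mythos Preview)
Your proposal is correct and follows essentially the same approach as the paper: decompose $\Omega$ into the $\prod_i c_i$ blocks $C_1\times\cdots\times C_\ell$, use Lemma~\ref{tech} on each block to get $c_g\ge\prod_i c_i$, rule out two $r$-cycle coordinates via $\gcd(r,r)=r\ge5$, and read off the five admissible profiles together with their gcd conditions. The only differences are organisational --- you isolate the ``at most one $r$-cycle'' observation before splitting into cases and you write the exact formula $c_g=\sum_{a,b}\gcd(|C_{1,a}|,|C_{2,b}|)$ (and its three-factor analogue) explicitly, whereas the paper proceeds block by block and exhibits explicit witness points in distinct orbits --- but the underlying argument is the same.
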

\begin{proof}
Let $c_g$ be the number of cycles of $g$ on $\Omega$ and let $c_i$ be the number of cycles of $h_i$ on $\Delta$. Replacing $g$ by a suitable conjugate in $G$, we may assume that $c_1\leq c_2\leq\cdots \leq c_\ell$. Clearly,
\begin{equation}\label{eq3}
c_g\geq \prod_{i=1}^{\ell}c_i.
\end{equation}
Assume that $c_1\geq 2$. As $c_g\leq 4$, from~\eqref{eq3}, we deduce that $\ell=2$ and $c_1=c_2=2$.
Write
\begin{eqnarray*}
h_1&=&(\alpha_1,\ldots,\alpha_{t_1})(\alpha_{t_1+1},\ldots,\alpha_{r})\\
h_2&=&(\beta_1,\ldots,\beta_{t_2})(\beta_{t_2+1},\ldots,\beta_{r}).
\end{eqnarray*}
If $\gcd(t_1,t_2)\geq 2$, then $g=(h_1,h_2)$ has at least two orbits on $\{\alpha_1,\ldots,\alpha_{t_1}\}\times \{\beta_1,\ldots,\beta_{t_2}\}$ from Lemma~\ref{tech}. Thus, $g$ has at least five orbits on $\Omega$ (the elements $(\alpha_{t_1+1},\beta_1)$, $(\alpha_{t_1+1},\beta_{t_2+1})$ and $(\alpha_1,\beta_{t_2+1})$ are in distinct $g$-orbits), which is a contradiction. Thus $t_1$ and $t_2$ are relatively prime. Similarly, each of $t_1,r-t_1$ is relatively prime to each of $t_2,r-t_2$. This yields part~(i).

Assume that $c_1=1$. Write $h_1=(\alpha_1,\ldots,\alpha_r)$. Suppose that $c_2=1$. From Lemma~\ref{tech}, we see that $(h_1,h_2)$ has $|\Delta|\geq 5$ orbits on $\Delta\times \Delta$, contradicting the fact that $c_g\leq 4$. Therefore $c_2\geq 2$. In particular,~\eqref{eq3} implies that either $\ell=3$ and $c_2=c_3=2$, or $\ell=2$ and $2\leq c_2\leq 4$.

Assume that $\ell=3$, $c_1=1$, $c_2=c_3=2$. Let $t_2,r-t_2$ and $t_3,r-t_3$ be the cycle lengths of $h_2$ and $h_3$. An application of Lemma~\ref{tech} to each of $(h_1,h_2)$, $(h_1,h_3)$ and $(h_2,h_3)$ implies that $r,t_2,r-t_2,t_3,r-t_3$ are relatively prime. This yields part~(iv).

Assume that $\ell=2$, $c_1=1$, $c_2=2$. Let $t_2,r-t_2$ be the cycle lengths of $h_2$. Note that $\gcd(r,t_2)=\gcd(r,r-t_2)$. An application of Lemma~\ref{tech} to $(h_1,h_2)$ implies that $\gcd(r,t_2)\leq 2$. This yields part~(ii).

The remaining cases ($c_2=3$ and $c_2=4$) follow with a similar argument giving rise to parts~(iii) and~(v).
\end{proof}

With a slight improvement of Proposition~\ref{prop2:PA} we are now able to prove Theorem~\ref{MAIN}.

\begin{proof}[Proof of Theorem~\ref{MAIN}]
From~\cite[Theorem~$1.3$]{GMPSorders} (which we restated in Section 1.1) and the discussion that follows our restatement of this result,
we have only to consider the case that $G=\Sym(m)\wr \Sym(\ell)$ in its product action (of degree $n={m\choose k}^\ell$) on a Cartesian product of $\ell$  copies of the set of $k$-subsets of $\{1,\ldots,m\}$ (with $1\leq k< m/2$), or that $G=\PGammaL_d(q)\wr\Sym(\ell)$ in its product action (of degree $n=((q^d-1)/(q-1))^\ell$) on a Cartesian product of $\ell$  copies of the point set of the projective space $\PGdq$. 

Assume that $\ell=1$. If $G=\Sym(m)$ acting on $k$-sets with $k\geq 2$, then by Proposition~\ref{main-sym} and Remark~\ref{sym6} we have $k=2$ and $m\leq 9$. For each of these cases, we list the structure of the permutations having at most four cycles in Table~\ref{BIGtablea}. If $k=1$, then we have Line~$1$ of Table~\ref{table1}. If $G=\PGammaL_d(q)$, then the proof follows from Theorem~\ref{thm} and we obtain Lines~$1$--$23$ of Table~\ref{table2}.

Now suppose that $\ell \ge 2$. Write $H=\Sym(m)$ or $H=\PGammaL_d(q)$ depending on which group we are considering. If $g\notin H^\ell$, then from Proposition~\ref{prop1:PA} we have $5\leq r\leq 8$ and $\ell=2$ or $r\in \{5,8\}$ and $\ell=3$. For each of these cases, the various possibilities for $\soc(G)$ and $(n_1,\ldots,n_N)$ (depending on whether $\soc(G)=\Alt(m)^\ell$ or $\soc(G)=\PSL_d(q)^\ell$) are reported in Table~\ref{table1} and Table~\ref{table2} (which have been obtained by computations in \texttt{magma}). For example the cycle lengths $12,12,12$ in Line~29 and the cycle lengths $16,16,16,16$ in Line~30 of Table~\ref{table2} arise only from group elements $g\notin H^\ell$. Similarly, Lines~$7$,~$8$,~$9$, and~$11$ of Table~\ref{table1} arise from group elements  $g\notin H^\ell$. So from now on we may assume that $g=(h_1,\ldots,h_\ell)\in H^\ell$.

Assume that $H=\Sym(m)$. If $k>1$ then there is no $g$ with four or fewer cycles.  This follows at once from Proposition~\ref{main-sym} and another computation with \texttt{magma}. In fact, if $k>1$, then $m\leq 9$ and also $\ell\leq 4$ from~\cite[Table~$6$]{GMPSorders}. Thus we only have a finite number of relatively small groups to check. 
Assume that $k=1$. Now Proposition~\ref{prop2:PA} describes the cycles lengths of $g$ and implies that $\ell\leq 3$. If $g$ satisfies Proposition~\ref{prop2:PA} part~(i), then $(n_1,\ldots,n_N)$ is in Line~$4$ of Table~\ref{table1}. If $g$ satisfies Proposition~\ref{prop2:PA} part~(ii), then $(n_1,\ldots,n_N)$ is in Line~$2$ of Table~\ref{table1} if $\gcd(r,t_2)=1$ and in Line~$6$ of Table~\ref{table1} if $\gcd(r,t_2)=2$ (with $k_1=k_2=t_2/2$ and $k_3=k_4=(r-t_2)/2$). If $g$ satisfies Proposition~\ref{prop2:PA} part~(iii), then $(n_1,\ldots,n_N)$ is in Line~$3$ of Table~\ref{table1} if $\gcd(r,t_2)=\gcd(r,t_2')=\gcd(r,t_2+t_2')=1$ and in Line~$5$ of Table~\ref{table1} if $\gcd(r,t_2)+\gcd(r,t_2')+\gcd(d,t_2+t_2')=4$. If $g$ satisfies Proposition~\ref{prop2:PA} part~(iv), then $(n_1,\ldots,n_N)$ is in Line~$10$ of Table~\ref{table1}. Finally, if $g$ satisfies Proposition~\ref{prop2:PA} part~(v), then $(n_1,\ldots,n_N)$ is in Line~$6$ of Table~\ref{table1}.

Now assume that $H=\PGammaL_d(q)$. Again, $g=(h_1,\ldots,h_\ell)$ satisfies one of parts (i)--(v) of Proposition~\ref{prop2:PA} with $r=(q^d-1)/(q-1)$. Since $\PSL_2(4)=\Alt(5)$, we may assume that $(d,q)\neq (2,4)$. Note that we can use Table~\ref{table2} for the cycle lengths of the elements $h_1,\ldots,h_\ell$. Recall when examining the table that we may assume $r =(q^d-1)/(q-1)\ge 5$.

Assume that $g=(h_1,h_2)$ satisfies Proposition~\ref{prop2:PA} part~(i). A careful inspection of Table~\ref{table2} shows that $d=2$ and either $q$ is prime, $h_1$ and $h_2$ have cycle lengths $1,q$ and $(q+1)/2,(q+1)/2$, or $q=9$ and $h_1$ and $h_2$ have cycle lengths $2,8$ and $5,5$. In the first case $(h_1,h_2)$ has cycle lengths $(q+1)/2,(q+1)/2,q(q+1)/2,q(q+1)/2$ and this example is in Table~\ref{table2} Line~$27$. In the second case, $(h_1,h_2)$ has cycle lengths $10,10,40,40$ and this example (simply by an arithmetical coincidence) is in Table~\ref{table2} Line~$25$.

Now suppose that $g=(h_1,h_2)$ satisfies Proposition~\ref{prop2:PA} part~(ii). A careful inspection of Table~\ref{table2} shows that $d=2$ and either (a) $q$ is prime and $h_1$ and $h_2$ have cycle lengths $q+1$ and $1,q$, or (b) $q=9$ and $h_1$ and $h_2$ have cycle lengths $10$ and $2,8$. In case (a), $(h_1,h_2)$ has cycle lengths $q,(q+1)q$ and this example is in Line~$26$ of Table~\ref{table2}. In case (b), $(h_1,h_2)$ has cycle lengths $10,10,40,40$ and this example is in Line~$25$ of Table~\ref{table2} (again by coincidence).

Next, suppose that $g=(h_1,h_2)$ satisfies Proposition~\ref{prop2:PA} part~(iii). A careful inspection of Table~\ref{table2} shows that one of the following holds:
\begin{enumerate}
\item $d=d_1+d_2$, $\gcd(d_1,d_2)=1$ and $h_2$ has cycle lengths $(q^{d_1}-1)/(q-1),(q^{d_2}-1)/(q-1),(q^{d_1}-1)(q^{d_2}-1)/(q-1)$; or
\item $d=2$, $q=4$ and $h_2$ has cycle lengths $1,2,2$; or
\item $d=2$, $q=9$ and $h_2$ has cycle lengths $1,3,6$; or
\item $d=2$, $q=16$ and $h_2$ has cycle lengths $1,8,8$, or $2,3,12$, or $2,5,10$; or
\item $d=3$, $q=2$ and $h_2$ has cycle lengths $1,2,4$.
\end{enumerate}
The examples in Cases~$(2)$--$(5)$ are in Lines~$28$~$31$,~$32$ and $34$ of Table~\ref{table2}. We now consider Case~$1$. Note that $\gcd((q^d-1)/(q-1),(q^{d_i}-1)/(q-1))=1$ for $i=1,2$. Also,

\[\gcd\left(\frac{q^d-1}{q-1},\frac{(q^{d_1}-1)(q^{d_2}-1)}{q-1}\right)=\gcd(d,q-1).\]
In particular, we obtain Line~$24$ of Table~\ref{table2} if $\gcd(d,q-1)=1$ and Line~$25$ of Table~\ref{table2} if $\gcd(d,q-1)=2$.

A careful inspection of Table~\ref{table2} shows that there are no permutations $h_2,h_3 \in \PGammaL_d(q)$ satisfying all of the requirements of Proposition~\ref{prop2:PA} part~(iv).

Finally, suppose that $g=(h_1,h_2)$ satisfies Proposition~\ref{prop2:PA} part~(v). A careful inspection of Table~\ref{table2} shows that
one of the following holds:
\begin{enumerate}
\item $d=d_1+d_2$, $\gcd(d_1,d_2)=1$, $d_1,d_2$ and $q$ are odd, $h_2$ has cycle lengths $(q^{d_1}-1)/(q-1)$, $(q^{d_2}-1)/(q-1)$, $(q^{d_1}-1)(q^{d_2}-1)/(2(q-1))$, $(q^{d_1}-1)(q^{d_2}-1)/(2(q-1))$; or
\item $d=2$, $q=9$ and $h_2$ has cycle lengths $1,3,3,3$; or
\item $d=2$, $q=27$ and $h_2$ has cycle lengths $1,9,9,9$; or
\item $d=3$, $q=4$ and $h_2$ has cycle lengths $1,4,8,8$.
\end{enumerate}
The examples in Cases~$(2)$--$(4)$ are in Lines~$31$,~$33$ and~$35$ of Table~\ref{table2}. Arguing in the same way as when $g$ is as in  part~(iii) of Proposition~\ref{prop2:PA} yields Line~$25$ of Table~\ref{table2}.
\end{proof}

\thebibliography{10}

\bibitem{42}D.~Berend, Y.~Bilu, Polynomials with roots modulo every integer, \textit{Proc. Amer. Math. Soc.} \textbf{124} (1996), 1663--1671.

\bibitem{magma}W.~Bosma, J.~Cannon, C.~Playoust, The Magma algebra system. I. The user language, \textit{J.
Symbolic Comput.} \textbf{24} (1997), 235--265.

\bibitem{BP}D.~Bubboloni, C.~E.~Praeger, Normal coverings of finite symmetric and alternating groups,
\textit{J. Combin. Theory Ser. A} \textbf{118} (2011), 2000--2024.

\bibitem{BPS}D.~Bubboloni, C.~E.~Praeger, P.~Spiga, Normal coverings and pairwise generation of finite alternating and symmetric groups, \textit{J. Algebra} \textbf{390} (2013), 199--215.

\bibitem{burnside}W.~Burnside, \emph{Theory of groups of finite order,} 2nd ed. (Cambridge University Press, 1911).

\bibitem{DM}
J.~D.~Dixon and B.~Mortimer, \emph{Permutation groups}, Graduate Texts in Mathematics vol. 163 (Springer-Verlag, New York, 1996).

\bibitem{Feit}
W.~Feit, Some consequences of the classification of finite simple groups. In \emph{The Santa Cruz conference on finite groups,} Proc. Sympos. Pure Math. \textbf{37} (American Mathematical Society, 1980), 175--181.

\bibitem{gems}C.~Fuchs, U.~Zannier, Composite rational functions expressible with few terms, \textit{J. Eur. Math. Soc. } \textbf{14} (2012), 175--208.

\bibitem{GAP4}
  The GAP~Group, \emph{GAP -- Groups, Algorithms, and Programming, 
  Version 4.6.4}; 
  2013,
  \verb+(http://www.gap-system.org)+.

\bibitem{GLS}
D.~Gorenstein, R.~Lyons, and R.~Solomon, \emph{The classification
 of the finite simple groups number 3 part {I} chapter {A}. Almost simple $K$-groups}, Mathematical Surveys and Monographs vol.~40 (American Mathematical Society, Providence, RI, 1998).

\bibitem{GMPSorders}S.~Guest, J.~Morris, C.~E.~Praeger, P.~Spiga, On the maximum orders of elements of finite almost simple groups and primitive permutation groups, submitted, arXiv:1301.5166 [math.GR]

\bibitem{GMPSaffine}S.~Guest, J.~Morris, C.~E.~Praeger, P.~Spiga, Affine transformations of finite vector spaces with large orders or few cycles, submitted, arXiv:1306.1368 [math.GR]

\bibitem{jones}G.~A.~Jones, Cyclic regular subgroups of primitive permutation groups, \textit{J. Group Theory} \textbf{5} (2002), 403--407.

\bibitem{LM}M.~W.~Liebeck, J.~Saxl, Primitive permutation groups containing an element of large prime order,
\textit{J. London Math. Soc. } \textbf{31} (1985), 237--249.

\bibitem{mcsorley}J.~P.~McSorley, Cyclic permutations in doubly-transitive groups, \textit{Comm. Algebra} \textbf{25} (1997), 33--35.

\bibitem{mueller}P.~M\"{u}ller, Permutation groups with a cyclic two-orbits subgroup and monodromy groups of
Laurent polynomials, \textit{Ann. Scuola Norm. Sup. Pisa} \textbf{12} (2013), 369--438.

\bibitem{Neu}P.~M.~Neumann, Finite permutation groups, edge-coloured graphs and matrices, in:
\textit{Topics in group theory and computation} (M.~P.~J.~Curran, ed., Acad. Press, London, 1977), 82--118.

\bibitem{PrPr}C.~E.~Praeger, On elements of prime order in primitive permutation groups, \textit{J. Algebra} \textbf{60} (1979), 126--157.
\end{document}